\newtheorem{theorem}{Theorem}[section]
\newtheorem{lemma}[theorem]{Lemma}
\newtheorem{remark}[theorem]{Remark}
\newtheorem{prop}[theorem]{Proposition}
\numberwithin{equation}{section}
\newcommand{\R}{{\mathbb R}}
\newcommand{\Q}{{\mathbb Q}}
\newcommand{\C}{{\mathbb C}}
\newcommand{\N}{{\mathbb N}}
\newcommand{\cL}{{\mathcal L}}
\newcommand{\cB}{{\mathcal B}}
\newcommand{\sC}{\mathsf{C}}
\newcommand{\G}{\Gamma}
\newcommand{\ve}{\varepsilon}
\newcommand{\su}{\subseteq}
\newcommand{\ov}{\overline}
\newcommand\proj{\mathop{\rm proj\,}}
\newcommand\Ker{\mathop{\rm Ker}}
\begin{document}

\title{Mean ergodic properties of the continuous Ces\`aro operators}

\author{Angela\,A. Albanese, Jos\'e Bonet and Werner\,J. Ricker}

\thanks{\textit{Mathematics Subject Classification 2010:}
Primary: 47A10, 47A16, 47A35; Secondary: 46A04, 47B34, 47B38.}
\keywords{Ces\`aro operator, continuous function spaces, $L^p$-spaces, (uniformly) mean ergodic operator, hypercyclic operator, supercyclic operator. }

\address{ Angela A. Albanese\\
Dipartimento di Matematica e Fisica ``E. De Giorgi''\\
Universit\`a del Salento- C.P.193\\
I-73100 Lecce, Italy}
\email{angela.albanese@unisalento.it}

\address{Jos\'e Bonet \\
Instituto Universitario de Matem\'{a}tica Pura y Aplicada IUMPA \\
Universidad Polit\'{e}cnica de Valencia \\
E-46071 Valencia, Spain} \email{jbonet@mat.upv.es}

\address{Werner J.  Ricker \\
Math.-Geogr. Fakult\"{a}t \\
Katholische Universit\"{a}t
Eichst\"att-Ingol\-stadt \\
D-85072 Eichst\"att, Germany}
\email{werner.ricker@ku-eichstaett.de}

\begin{abstract}
Various properties of the (continuous)  Ces\`aro operator $\mathsf{C}$, acting on Banach and Fr\'echet spaces of continuous functions and $L^p$-spaces, are investigated. For instance, the spectrum and point spectrum of $\sC$ are completely determined and a study of certain dynamics of $\sC$ is undertaken (eg. hyper- and supercyclicity, chaotic behaviour). In addition, the mean (and uniform mean) ergodic nature of $\sC$ acting in the various spaces is identified. 
\end{abstract}

\maketitle


\section{Introduction}

\markboth{A.\,A. Albanese, J. Bonet and W.\,J. Ricker}%
{\MakeUppercase{Mean ergodic properties }}

Let $f$ be a $\C$-valued, locally integrable function defined on $\R^+:=[0,\infty)$. 
Then the Ces\`aro average $\sC f$ of $f$ is the function  defined by
\begin{equation}\label{eq.I-1}
\sC f(x):=\frac{1}{x}\int_0^xf(t)\,dt,\quad x\in (0,\infty).
\end{equation}
The linear map $f\mapsto \sC f$ is called the \textit{continuous Ces\`aro operator} (as distinct from the discrete Ces\`aro operator which forms the sequence of averages of vectors coming from various Banach \textit{sequence} spaces) and has been intensively investigated in such Banach spaces as $L^p([0,1])$ and $L^p(\R^+)$, for $1<p<\infty$. The boundedness of $\sC$ on these spaces is due to G.H. Hardy, \cite[p.240]{HLP}, who showed that the operator norm $\|\sC\|_{op}=q$ in both $L^p([0,1])$ and $L^p(\R^+)$, where $\frac{1}{p}+\frac{1}{q}=1$. The spectra and point spectra of $\sC$ are also known; see \cite{B}, \cite{BHS}, \cite{Le}, \cite{Le-2}, for example, and the references therein. Two further Banach spaces on which $\sC$ is naturally defined are the spaces of continuous functions $C([0,1])$ and $C_l([0,\infty])$, both equipped with the sup-norm; here $C_l([0,\infty])$ is the space of all $\C$-valued, continuous functions $f$ on $\R^+$ for which $f(\infty):=\lim_{x\to\infty} f(x)$ exists in $\C$. In both spaces $\|\sC\|_{op}=1$. The spectrum and point spectrum of $\sC$ acting in these spaces are completely determined in Propositions \ref{PS1-C} and \ref{PS2-C}.

The dynamics of $\sC$ have also been investigated in recent years. Recall that a bounded linear operator $T$, defined on a separable Banach space $X$ (or, more generally, a locally convex Hausdorff space $X$, briefly lcHs), is said to be \textit{hypercyclic} if there exists $x\in X$ such that its orbit $\{T^nx\colon n\in\N_0\}$ is dense in $X$. Also, $T$ is called \textit{supercyclic} if, for some $x\in X$, the projective orbit $\{\lambda T^nx\colon \lambda\in \C,\ n\in\N_0\}$ is dense in $X$. Finally, $T$ is said to be \textit{chaotic} if it is hypercyclic and the set of periodic points $\{u\in X\colon \exists\, n\in\N {\rm \ with \ } T^nu=u\}$ is dense in $X$. As general references we refer to \cite{BM}, \cite{GE-P}, for example. It is known that $\sC$ acting on $L^p([0,1])$, $1<p<\infty$, is hypercyclic and chaotic, \cite{LPS}, and that it is not (weakly) supercyclic in $L^2(\R^+)$, \cite{G-LS}. On the other hand, $\sC$ is not supercyclic (hence, not hypercyclic) on $C([0,1])$, \cite{LPS}. We continue an investigation of such properties. For instance, in Proposition \ref{P-C} it is shown that $\sC$ is not supercyclic on $C_l([0,\infty])$ and in Remark \ref{r.(infty)} that $\sC$ has no non--zero periodic points in $L^p(\R^+)$, $1<p<\infty$.

There are also two natural types of \textit{Fr\'echet spaces} in which the Ces\`aro operator $\sC$ acts continuously. One is the Fr\'echet space $C(\R^+)$ consisting of all $\C$--valued, continuous functions on $\R^+$ endowed with the topology of uniform convergence on the compact subsets of $\R^+$. In this space the spectrum of $\sC$ is completely determined and it is shown that $\sC$ is not supercyclic; see Theorem \ref{T.7}. The other class of Fr\'echet spaces consists of the reflexive spaces $L^p_{loc}(\R^+)$, $1<p<\infty$, consisting of all $\C$--valued, measurable functions on  $\R^+$ which are $p$-th power integrable on each set $[0,j]$, for $j\in\N$. In these spaces  the spectrum of $\sC$ is also determined and it is shown that $\sC$ is chaotic (cf. Theorem \ref{T1-Lp}).

The main point of departure of this paper is actually to investigate (various) mean ergodic properties of $\sC$.
Let   $X$ be a  lcHs and $\Gamma_X$ be a system of continuous seminorms determining the topology of $X$. The strong operator topology $\tau_s$ in the space $\cL(X)$ of all continuous linear operators from $X$ into itself (from $X$ into another lcHs $Y$ we write $\cL(X,Y)$) is determined by the family of seminorms
$q_x(S):=q(Sx)$, for  $S\in\cL(X)$,
for each $x\in X$ and $q\in \Gamma_X$, in which case we write $\cL_s(X)$. Denote by $\cB(X)$ the collection of all bounded subsets of $X$. The topology $\tau_b$ of uniform convergence on bounded sets is defined in $\cL(X)$ via the seminorms
$q_B(S):=\sup_{x\in B}q(Sx)$, for  $S\in\cL(X)$,
for each $B\in \cB(X)$ and $q\in \Gamma_X$; in this case we write $\cL_b(X)$. For $X$ a Banach space, $\tau_b$ is the operator norm topology in $\cL(X)$. If $\Gamma_X$ is countable and $X$ is complete, then $X$ is called a Fr\'echet space. The identity operator on a lcHs $X$ is denoted by $I$. Finally, the \textit{dual operator} of $T\in \cL(X)$ is denoted by $T'\colon X'\to X'$, where $X'=\cL(X,\C)$  is the topological dual space of $X$. As a general reference for lcHs' see \cite{MV}. 


The relevant classes of operators are as follows. We say that $T \in \cL(X)$,
with $X$ a lcHs, is \textit{power bounded}  if $\{T^n\}_{n=1}^\infty$ is an equicontinuous subset of $\cL(X)$. For  $X$ a Banach space, this means precisely  that $\sup_{n\in\N}\|T^n\|_{op}<\infty$.  Given $T\in \cL(X)$, we can consider its sequence of averages
\begin{equation}\label{eq.e}
    T_{[n]} := \frac 1 n \sum^n_{m=1} T^m, \qquad      n\in \N,
\end{equation}
called the  Ces\`aro means of $T$.
Then $T$ is called \textit{mean ergodic} (resp., \textit{uniformly mean ergodic}) if  $\{T_{[n]}\}^\infty_{n=1}$
is a convergent sequence in $\cL_s (X)$ (resp., in $\cL_b (X)$).
The study of such operators, initiated by J. von Neumann, N. Dunford, F. Riesz and others, began in the 1930's and has continued ever since; see \cite{K}, \cite[Ch. VIII]{Y} and the references therein. In Theorem \ref{T1-C} it is shown that $\sC$ acting on the Banach space $C([0,1])$ is power bounded and mean ergodic but, fails to be uniformly mean ergodic, whereas in the Banach space $C_l([0,\infty])$ the  Ces\`aro operator $\sC$ is power bounded but, not even mean ergodic (cf. Theorem \ref{T2-C}). In the class  of Banach spaces $L^p([0,1])$, $1<p<\infty$, it is established in Theorem \ref{T1-L} that $\sC$ is neither power bounded nor mean ergodic; the same is shown to be true for $\sC$ acting on $L^p(\R^+)$, $1<p<\infty$ (cf. Theorem \ref{T2-L}). Concerning the above mentioned classes of Fr\'echet spaces in which $\sC$ acts continuously, it is shown in Theorem 	\ref{T.7} that $\sC$ is both power bounded and mean ergodic in $C(\R^+)$ but, not uniformly mean ergodic. Finally, in the Fr\'echet spaces $L^p_{loc}(\R^+)$, $1<p<\infty$, it turns out that $\sC$ is neither power bounded nor mean ergodic (cf. Theorem \ref{T1-Lp}). For recent results on mean ergodic operators in lcHs' we refer to \cite{ABR-1}, \cite{ABR-3}, \cite{ABR-4}, \cite{ABR-7}, \cite{P-1}, \cite{P-2}, for example, and the references therein.

For a Fr\'echet space $X$ and $T\in \cL(X)$, the \textit{resolvent set} $\rho(T)$ of $T$ consists of all $\lambda\in\C$ such that $R(\lambda,T):=(\lambda I- T)^{-1}$ exists in $\cL(X)$. Then $\sigma(T):=\C\setminus \rho(T)$ is called the \textit{spectrum} of $T$. 
The \textit{point spectrum} $\sigma_{pt}(T)$ consists of all $\lambda\in\C$ such that $(\lambda I-T)$ is not injective. If we need to stress the space $X$, then we also write $\sigma(T;X)$, $\sigma_{pt}(T;X)$ and $\rho(T;X)$.
Unlike for Banach spaces, it may happen that $\rho(T)=\emptyset$. For example, let $\omega=\C^\N$ be the Fr\'echet space equipped with the lc--topology determined via the seminorms $\{q_n\}_{n=1}^\infty$, where $q_n(x):=\max_{1\leq j\leq n}|x_j|$, for $x=(x_j)_{j=1}^\infty\in \omega$. Then the unit left shift operator $T\colon x\mapsto (x_2,x_3,x_4,\ldots)$, for $x\in \omega$, belongs to $\cL(\omega)$ and, for every $\lambda\in\C$, the element $(1,\lambda,\lambda^2,\lambda^3,\ldots)\in\omega$ is an eigenvector corresponding to $\lambda$.   Or, let $A=\{\alpha_n\colon n\in\N\}$ be \textit{any} countable subset of $\C$ and define $S\in \cL(\omega)$ by $S\colon x\mapsto (\alpha_1x_1,\alpha_2 x_2,\alpha_3 x_3,\ldots)$, for $x\in \omega$. Then $\sigma(S)=\sigma_{pt}(S)=A$ and hence, $\sigma(S)$ need not even be a closed subset of $\C$.

For ease of reading, some technical (but useful) results which are needed in relation to the spectrum and mean ergodicity of continuous linear operators acting in the class of Fr\'echet spaces called \textit{quojections} (to which $C(\R^+)$ and $L^p_{loc}(\R^+)$, $1<p<\infty$, belong) have been formulated in an Appendix at the end of the paper.

\section{The Ces\`aro operator on Banach spaces of continuous functions}

We consider here the continuous Ces\`aro operator $\sC$ given in \eqref{eq.I-1} when acting
on the Banach spaces $C([0,1])$ and $C_l([0,\infty])$.

In order to make the definition of the operator $\sC$ consistent, we set $\sC f(0):=\lim_{x\to 0^+}\sC f(x)=f(0)$ for every $f\in C([0,1])$ or  $f\in C_l([0,\infty])$. It is routine  to check if $f\in C_l([0,\infty])$, then also $\lim_{x\to\infty}\sC f(x)$ exists and equals $f(\infty):=\lim_{x\to\infty}f(x)$, i.e.,  $\sC f(\infty)=f(\infty)$. Then the linear maps $\sC\colon C([0,1])\to C([0,1])$ and  $\sC\colon C_l([0,\infty])\to C_l([0,\infty])$ are well defined with $\|\sC\|_{op}=1$ and satisfy $\sC\textbf{1}=\textbf{1}$, where $\textbf{1}$ is the constant function equal to $1$. Moreover, the null space $\Ker(I-\sC)={\rm span}\{\textbf{1}\}$. Indeed,  every function $f$ satisfying $\sC f=f$ must be continuously differentiable on $(0,1)$ or $(0,\infty)$ via \eqref{eq.I-1}; apply the quotient rule to deduce from \eqref{eq.I-1} and $(\sC f)'=f'$ that $f'\equiv 0$.

In order to investigate the uniform mean ergodicity of  $\sC$  we require  the following two lemmas.

\begin{lemma}\label{L1-C} The closure $\ov{(I-\sC)(C([0,1]))}$ of the range $(I-\sC)(C([0,1]))$ of $(I-\sC)$ is precisely  the space $Z:=\{f\in C([0,1])\colon f(0)=0\}$.
\end{lemma}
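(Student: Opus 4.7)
The inclusion $\overline{(I-\sC)(C([0,1]))}\subseteq Z$ is immediate: for any $f\in C([0,1])$ we have $(I-\sC)f(0)=f(0)-\sC f(0)=f(0)-f(0)=0$, so $(I-\sC)(C([0,1]))\subseteq Z$, and $Z$ is closed in $C([0,1])$ as the kernel of the evaluation functional at $0$. The substantive content is the reverse inclusion $Z\subseteq\overline{(I-\sC)(C([0,1]))}$.

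The plan is to approximate an arbitrary $g\in Z$ by functions $g_n\in Z$ for which the equation $(I-\sC)f_n=g_n$ can be solved explicitly in $C([0,1])$, and then note that the range is dense by construction. To motivate the explicit formula, observe that if $(I-\sC)f=g$ and $F(x):=\int_0^x f(t)\,dt$, then $F'(x)-F(x)/x=g(x)$, whence $(F(x)/x)'=g(x)/x$ and, after integrating and differentiating, one is led to the candidate
\begin{equation*}
f(x)\;=\;g(x)\,+\,\int_0^x\frac{g(t)}{t}\,dt.
\end{equation*}
For a general $g\in Z$ the integral $\int_0^x g(t)/t\,dt$ need not converge (continuity plus $g(0)=0$ is not enough), which is precisely the main obstacle; this is the reason an approximation step is required rather than a direct inversion.

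To carry this out, let $g\in Z$ and pick, for each $n\in\N$, a continuous cut-off $\vp_n\colon[0,1]\to[0,1]$ with $\vp_n\equiv 0$ on $[0,1/n]$ and $\vp_n\equiv 1$ on $[2/n,1]$. Set $g_n:=\vp_n g\in Z$. Since $g(0)=0$ and $g$ is continuous, $\sup_{x\in[0,2/n]}|g(x)|\to0$, and a direct estimate gives $\|g_n-g\|_\infty\to 0$. Because $g_n$ vanishes on $[0,1/n]$, the function $t\mapsto g_n(t)/t$ extends continuously to $[0,1]$ (by $0$ at $t=0$), so the candidate $f_n(x):=g_n(x)+\int_0^x g_n(t)/t\,dt$ is well defined in $C([0,1])$.

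The remaining step is to verify $(I-\sC)f_n=g_n$, and this is a short calculation: integration by parts gives
\begin{equation*}
\int_0^x\!\Big(\int_0^s\frac{g_n(t)}{t}\,dt\Big)\,ds\;=\;x\int_0^x\frac{g_n(t)}{t}\,dt\,-\,\int_0^x g_n(t)\,dt,
\end{equation*}
from which $\sC f_n(x)=\int_0^x g_n(t)/t\,dt$ and therefore $f_n-\sC f_n=g_n$. Thus $g_n\in(I-\sC)(C([0,1]))$ for every $n$, and $g_n\to g$ in the sup-norm, proving $g\in\overline{(I-\sC)(C([0,1]))}$. Combined with the first paragraph, this yields the asserted equality.
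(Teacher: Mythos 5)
Your proof is correct, but it follows a genuinely different route from the paper's. The paper never solves the equation $(I-\sC)f=g$: it simply observes that $(I-\sC)x^n=\frac{n}{n+1}\,x^n$, so that ${\rm span}\{x^n\colon n\in\N\}$ lies in the range, and then invokes Weierstrass' theorem, subtracting the constant term $P_k(0)$ (which tends to $g(0)=0$) to push the approximating polynomials into that span. You instead invert $I-\sC$ explicitly: for $g_n=\vp_n g$ vanishing near $0$, the function $f_n=g_n+\int_0^x g_n(t)/t\,dt$ is continuous and your integration-by-parts computation correctly yields $\sC f_n(x)=\int_0^x g_n(t)/t\,dt$, hence $(I-\sC)f_n=g_n$ (the parts step is legitimate because $t\mapsto g_n(t)/t$ extends continuously to $[0,1]$, so $H(s)=\int_0^s g_n(t)/t\,dt$ is $C^1$ with $H(0)=0$); the cutoff estimate $\|g_n-g\|_\infty\le\sup_{[0,2/n]}|g|\to0$ is also fine. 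What each approach buys: the paper's argument is shorter and purely soft, needing only density of polynomials and the eigenvector identity; yours is slightly longer but more informative, since it exhibits the explicit solution formula and dovetails with the paper's Lemma \ref{L2-C} --- the existence of $\lim_{\ve\to 0^+}\int_\ve^x g(t)/t\,dt$ is precisely the obstruction to membership in the range, your cutoff is exactly what removes it, and your computation is in effect the converse of Lemma \ref{L2-C}. This makes transparent, in a way the paper's proof does not, why the range is dense in $Z$ yet strictly smaller (as exploited in Theorem \ref{T1-C} with $g(x)=-1/\log x$).
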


\begin{proof} Clearly, $Z$ is a closed subspace of $C([0,1])$. Since $\sC f(0)=f(0)$ for all $f\in C([0,1])$, the space $(I-\sC)(C([0,1]))\su Z$. So, $\ov{(I-\sC)(C([0,1]))}\su Z$. 

For each $n\in\N$, direct calculation yields $(I-\sC)x^n=\left(1-\frac{1}{n+1}\right)x^n$. It follows that
\begin{equation}\label{eq.l-span}
{\rm span}\{x^n\colon n\in\N\}\su (I-\sC)(C([0,1]))\su Z.
\end{equation}
Fix $g\in Z$. By Weierstrass' theorem there exists a sequence of polynomials $\{P_k\}_{k=1}^\infty$ such that $P_k\to g$ uniformly on $[0,1]$. Since $\textbf{1}\not\in Z$, the polynomials $\{P_k\}_{k=1}^\infty$ may not lie in $(I-\sC)(C([0,1]))\su Z$. However, it follows from
  $P_k(0)\to g(0)=0$ that the sequence of polynomials $Q_k(x):=P_k(x)-P_k(0)$, for $x\in [0,1]$ and $k\in\N$, lies in the left-side of \eqref{eq.l-span}. 
	Since also $Q_k\to g$ uniformly on $[0,1]$, we have (via \eqref{eq.l-span}) that  $g\in \ov{{\rm span}\{x^n\colon n\in\N\}}$ and the lemma is proved. 
\end{proof}

\begin{lemma}\label{L2-C} Let $g\in C([0,1])$ belong to $(I-\sC)(C([0,1]))$. Then $g(0)=0$ and, for each $x\in (0,1)$, the limit $\lim_{\ve\to 0^+}\int_{\ve}^x\frac{g(t)}{t}\, dt$ exists.  
\end{lemma}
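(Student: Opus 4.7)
The plan is to use the defining relation $g = (I - \sC)f$ for some $f \in C([0,1])$ and to identify $g(t)/t$ explicitly as the derivative of something continuous up to $0$.

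First I would observe that since $\sC f(0) = f(0)$ by definition, evaluating $g = f - \sC f$ at $0$ immediately gives $g(0) = 0$. So that part is free.

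For the limit, the key computation is to introduce $F(t) := \int_0^t f(s)\,ds$, which is of class $C^1$ on $[0,1]$ with $F'(t) = f(t)$, so that $\sC f(t) = F(t)/t$ is continuously differentiable on $(0,1)$. Differentiating by the quotient rule yields
\begin{equation*}
(\sC f)'(t) = \frac{f(t)}{t} - \frac{F(t)}{t^2} = \frac{f(t) - \sC f(t)}{t} = \frac{g(t)}{t}, \qquad t \in (0,1).
\end{equation*}
This is exactly the identity the authors hint at when, just before the lemma, they apply the quotient rule to $\sC f$ in connection with the kernel of $I - \sC$.

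Given this identity, the fundamental theorem of calculus applied on $[\ve, x] \subset (0,1)$ gives
\begin{equation*}
\int_\ve^x \frac{g(t)}{t}\,dt = \sC f(x) - \sC f(\ve).
\end{equation*}
Since $\sC f \in C([0,1])$ with $\sC f(0) = f(0)$, letting $\ve \to 0^+$ shows that the limit exists and equals $\sC f(x) - f(0)$. The main (minor) obstacle is just to be sure $F$ is actually $C^1$ and $\sC f$ extends continuously up to $0$, but both follow immediately from $f \in C([0,1])$, so there is no real difficulty. The proof is essentially a two-line verification once one writes $g(t)/t$ as $(\sC f)'(t)$.
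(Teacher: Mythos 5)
Your proof is correct and is essentially the paper's own argument: the auxiliary function $h:=f-g$ that the authors introduce is precisely $\sC f$, and their identity $h'(t)=g(t)/t$ followed by the fundamental theorem of calculus on $[\ve,x]$ is exactly your computation $(\sC f)'(t)=g(t)/t$ and $\int_\ve^x \frac{g(t)}{t}\,dt=\sC f(x)-\sC f(\ve)$. The only cosmetic difference is that you obtain the derivative identity by the quotient rule on $F(t)/t$, whereas the paper differentiates the relation $t\,h(t)=\int_0^t f(s)\,ds$; these are the same calculation.
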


\begin{proof} Let $f\in C([0,1])$ satisfy $(I-\sC)f=g$. Then $g(0)=0$ and
\[
f(x)-\frac{1}{x}\int_0^xf(t)\, dt=g(x), \quad x\in (0,1].
\]
The function $h:=(f-g)\in C([0,1])$ satisfies $h(x)=\frac{1}{x}\int_0^xf(t)\, dt$, for $x\in (0,1]$, and hence, $h$ is continuously differentiable on $(0,1]$. Since $xh(x)=\int_0^xf(t)\,dt$, for $x\in (0,1]$,  we can conclude via differentiation that $h(x)+xh'(x)=f(x)$, for $x\in (0,1]$. It follows that  $h'(x)=\frac{g(x)}{x}$ on  $ (0,1]$.

Fix $x\in (0,1]$. For each $\ve\in (0,x)$, the continuity of $h'$ on $[\ve,x]$ implies that 
\[
h(x)-h(\ve)=\int_{\ve}^xh'(t)\,dt=\int_{\ve}^x\frac{g(t)}{t}\,dt.
\]
As $h\in C([0,1])$, it follows that $\lim_{\ve\to 0^+}\int_{\ve}^x\frac{g(t)}{t}\, dt=h(x)-h(0)$ exists.
\end{proof}

\begin{theorem}\label{T1-C} The Ces\`aro operator $\sC\colon C([0,1])\to C([0,1])$ is power bounded and  mean ergodic but, not uniformly mean ergodic. Also, $\sC$ fails to be hypercyclic.
\end{theorem}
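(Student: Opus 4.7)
My plan is to address the four assertions separately. First, power boundedness is immediate from $\|\sC\|_{op}=1$, since then $\|\sC^n\|_{op}\le 1$ for every $n\in\N$. This at once rules out hypercyclicity: every orbit $\{\sC^n f\}_{n\ge 0}$ has norm bounded by $\|f\|_\infty$, and a bounded set cannot be dense in the infinite--dimensional space $C([0,1])$.

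For mean ergodicity I would exploit the diagonal action of $\sC$ on monomials. A direct computation gives $\sC(x^k)=x^k/(k+1)$, hence $\sC^m(x^k)=x^k/(k+1)^m$, and summing a geometric series yields $\sC_{[n]}(x^k)= x^k(1-(k+1)^{-n})/(nk)$ for $k\ge 1$, which tends to $0$ in sup--norm as $n\to\infty$, while $\sC_{[n]}\mathbf{1}=\mathbf{1}$. By linearity, $\sC_{[n]}P\to P(0)\mathbf{1}$ uniformly for every polynomial $P$. Since polynomials are dense in $C([0,1])$ by Weierstrass' theorem and $\|\sC_{[n]}\|_{op}\le 1$ uniformly in $n$, a standard $3\ve$ density argument then extends this to $\sC_{[n]}f\to f(0)\mathbf{1}$ in sup--norm for every $f\in C([0,1])$, establishing mean ergodicity (the limit being the projection of $C([0,1])$ onto $\Ker(I-\sC)={\rm span}\{\mathbf{1}\}$ along $Z$).

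The main obstacle is the failure of uniform mean ergodicity, which I plan to obtain via Lin's theorem: for a power bounded operator $T$ on a Banach space, $T$ is uniformly mean ergodic if and only if the range $(I-T)X$ is closed. By Lemma \ref{L1-C} the closure of $(I-\sC)(C([0,1]))$ is precisely $Z=\{f\in C([0,1])\colon f(0)=0\}$, so it suffices to produce some $g\in Z$ that does not lie in the range. Lemma \ref{L2-C} gives the natural obstruction: any such $g$ must have $\lim_{\ve\to 0^+}\int_{\ve}^x g(t)/t\,dt$ exist for every $x\in(0,1]$. I therefore plan to display an explicit continuous $g$ with $g(0)=0$ for which $t\mapsto g(t)/t$ fails to be integrable near $0$. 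A concrete candidate is $g(t):=1/\log(2/t)$ for $t\in(0,1]$ and $g(0):=0$; the substitution $u=\log(2/t)$ shows that $\int_{\ve}^x g(t)/t\,dt=\log\log(2/\ve)-\log\log(2/x)\to\infty$ as $\ve\to 0^+$, so $g\in Z\setminus (I-\sC)(C([0,1]))$. Hence the range is not closed and $\sC$ cannot be uniformly mean ergodic.
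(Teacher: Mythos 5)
Your proposal is correct, and three of the four claims (power boundedness, non-hypercyclicity, failure of uniform mean ergodicity) follow essentially the paper's route: the last of these uses exactly the same scheme as the paper, namely Lin's theorem reducing the question to closedness of the range, combined with Lemmas \ref{L1-C} and \ref{L2-C} and an explicit logarithmic counterexample --- your $g(t)=1/\log(2/t)$ is just a smoother variant of the paper's choice $g(x)=-1/\log x$ on $(0,1/2]$ extended constantly, and your substitution computation of $\int_\ve^x g(t)\,dt/t \to\infty$ is sound (note that Lemma \ref{L2-C} already makes failure at a single $x$ sufficient). Where you genuinely diverge is mean ergodicity: the paper invokes the theorem of Galaz Fontes and Sol\'{\i}s that the iterates themselves converge, $\sC^n f\to f(0)\mathbf{1}$ for every $f\in C([0,1])$, and then observes that Ces\`aro means of a convergent sequence converge to the same limit; you instead prove convergence of the averages directly and self-containedly, via the diagonal action $\sC^m(x^k)=x^k/(k+1)^m$, the correct geometric-sum identity $\sC_{[n]}(x^k)=x^k\bigl(1-(k+1)^{-n}\bigr)/(nk)$, Weierstrass density, and the uniform bound $\|\sC_{[n]}\|_{op}\le 1$ --- the classical ``dense subspace plus equicontinuity'' pattern. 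Your argument buys independence from the external citation and is more elementary; the paper's approach buys the strictly stronger fact of strong convergence of the iterates $\{\sC^n\}_{n=1}^\infty$ to the projection $P$, which the authors reuse later (in the proof of Theorem \ref{T2-C}), whereas your method only yields convergence of the means --- exactly what this theorem requires, but nothing more. One trivial remark: your parenthetical that a bounded orbit cannot be dense ``in the infinite-dimensional space $C([0,1])$'' does not need infinite-dimensionality; a norm-bounded set is never dense in any nonzero normed space.
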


\begin{proof} Since $\|\sC\|_{op}=1$ and $\sC^n\textbf{1}=\textbf{1}$ for each $n\in\N$, it follows that $\|\sC^n\|_{op}=1$ for each $n\in\N$. Hence, $\sC$ is power bounded; this also implies immediately that  $\sC$ cannot be hypercyclic.

By \cite[Theorem 3]{GF-S}, for every $f\in C([0,1])$, the sequence $\{\sC^nf\}_{n=1}^\infty$ converges to $f(0)\textbf{1}$ in $C([0,1])$. This implies that the operator sequence of iterates $\{\sC^n\}_{n=1}^\infty$ converges to the projection $P\colon C([0,1])\to C([0,1])$ given by  $f\mapsto Pf:=f(0)\textbf{1}$, in $\cL_s(C([0,1]))$. Since the averages of any convergent sequence converge to the same limit, the arithmetic means $\{\sC_{[n]}\}_{n=1}^\infty$ also converge to $P$ in   $\cL_s(C([0,1]))$, i.e., $\sC$ is mean ergodic.

It remains to  show that $\sC\colon C([0,1])\to C([0,1])$ is not uniformly mean ergodic. Since $\|\sC^n\|_{op}=1$, for each $n\in\N$, we have $\lim_{n\to\infty}\frac{\|\sC^n\|_{op}}{n}=0$. A theorem of M. Lin, \cite{Li}, asserts that the uniform mean ergodicity of $\sC$ is then equivalent to the the range $(I-\sC)(C([0,1]))$ of $(I-\sC)$ being closed in $C([0,1])$. 

Consider the continuous function $g(x):=-1/(\log x)$, for $x\in (0,1/2]$, with $g(0):=0$ and $g(x):=1/(\log 2)$, for $x\in [1/2,1]$. By Lemma \ref{L1-C} the function $g\in \ov{(I-\sC)(C([0,1]))}$. On the other hand, for every $\ve\in (0,1/2)$, we have
\[
\int_{\ve}^{1/2}\frac{g(t)}{t}\, dt=-\int_{\ve}^{1/2}\frac{dt}{t\log t}=\log (-\log\ve)-\log (\log 2),
\]
which tends to $\infty$ as $\ve\to 0^+$. By Lemma \ref{L2-C}  it follows that $g\not\in (I-\sC)(C([0,1]))$, i.e., $(I-\sC)(C([0,1]))$ is not closed in $C([0,1])$. Then  the stated result of M. Lin yields that  $\sC$ is not uniformly mean ergodic. \end{proof}

\bigskip

Our next result is stated (correctly) in \cite[Theorem 2.7]{LPS}. However, the proof given there is incorrect as it is based on the claim that $\sigma_{pt}(\sC')=\emptyset$, which is \textit{not} the case. Indeed, the Dirac point measure $\delta_0$ induces the element of $(C([0,1]))'$ given by
$\delta_0\colon f\mapsto f(0)$, which  satisfies $\sC'\delta_0=\delta_0$. Hence, 
$\sigma_{pt}(\sC')\not =\emptyset$. 

\begin{prop}\label{P.8} The Ces\`aro operator  $\sC\colon C([0,1])\to C([0,1])$ is not supercyclic.
\end{prop}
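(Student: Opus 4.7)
The plan is to exploit the fact (already used implicitly in the paper) that $\sC$ preserves the value at $0$, which means that $\delta_0 \in C([0,1])'$ is a non-zero eigenvector of $\sC'$ with eigenvalue $1$. Combined with power boundedness ($\|\sC^n\|_{op}=1$ for all $n$), this forces the projective orbit of any candidate supercyclic vector to lie in an exceedingly constrained subset of $C([0,1])$.

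Suppose, toward a contradiction, that some $f \in C([0,1])$ is a supercyclic vector for $\sC$, and split into two cases according to whether $f(0)=0$ or not. In either case, the key identity is
\[
(\lambda \sC^n f)(0) \;=\; \lambda\,(\sC^n f)(0) \;=\; \lambda\, f(0), \qquad \lambda\in\C,\ n\in\N_0,
\]
since $(\sC g)(0)=g(0)$ for every $g\in C([0,1])$.

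If $f(0)=0$, then every element of the projective orbit lies in the closed subspace $Z = \{g\in C([0,1]): g(0)=0\}$ introduced in Lemma \ref{L1-C}. Because $Z$ is a proper closed subspace (e.g.\ $\mathbf{1}\notin Z$), density fails immediately.

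If $f(0)\neq 0$, test the orbit against the function $g(x):=x$, which satisfies $g(0)=0$ and $\|g\|_\infty=1$. Supercyclicity would produce sequences $\lambda_k\in\C$ and $n_k\in\N_0$ with $\lambda_k \sC^{n_k} f \to g$ in $C([0,1])$. Evaluating at $0$ gives $\lambda_k f(0) \to g(0) = 0$, forcing $\lambda_k\to 0$. But then, using $\|\sC^{n_k}\|_{op}=1$,
\[
\|\lambda_k \sC^{n_k} f\|_\infty \;\leq\; |\lambda_k|\,\|f\|_\infty \;\longrightarrow\; 0,
\]
contradicting $\|\lambda_k \sC^{n_k} f\|_\infty \to \|g\|_\infty = 1$. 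I do not expect any real obstacle in this argument; the only point that needs a moment's care is to combine the two elementary observations (invariance of $\delta_0$ under $\sC'$ and $\|\sC^n\|_{op}=1$, both already recorded in the proof of Theorem \ref{T1-C}) in the right way, and to choose an explicit test function $g$ that lies simultaneously outside $Z$ in the norm sense and inside $Z$ in the value-at-$0$ sense.
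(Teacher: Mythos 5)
Your proof is correct and is essentially the paper's own argument: both rest on the identity $(\sC^n f)(0)=f(0)$ together with $\|\sC^n\|_{op}=1$, and both derive the contradiction by testing the projective orbit against the function $x\mapsto x$. The only cosmetic difference is that the paper disposes of the case $f(0)=0$ by approximating $\mathbf{1}$ (forcing $\lambda_k f(0)\to 1$), whereas you observe directly that the orbit then lies in the proper closed subspace $Z$ of Lemma \ref{L1-C}; the two steps are interchangeable.
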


\begin{proof} Proceeding by contradiction, 
suppose that there exists a supercyclic vector $g\in C([0,1])$ for $\sC$, i.e., the set  $\{\lambda \sC^ng\colon \lambda\in\C, \ n\in\N_0\}$ is dense in $C([0,1])$. Then there exist a sequence $\{\lambda_k\}_{k=1}^\infty\su \C$ and an increasing sequence $\{n_k\}_{k=1}^\infty\su \N$ such that $\lambda_k \sC^{n_k}g\to \textbf{1}$ in $C([0,1])$ for $k\to\infty$. In particular, $\lambda_k  \sC^{n_k}g(0)=\lambda_k g(0)\to 1$ for $k\to\infty$ and so $g(0)\not=0$.
On the other hand,
given any function $f\in C([0,1])$ such that $f\not=0$ but $f(0)=0$ (eg., $f(x)=x$ for $x\in [0,1]$), there exist a sequence $\{\mu_r\}_{r=1}^\infty\su \C$ and an increasing sequence $\{m_r\}_{r=1}^\infty\su \N$ such that $\mu_r \sC^{m_r}g\to f$ in $C([0,1])$ for $r\to\infty$. Hence, $\mu_r  \sC^{m_r}g(0)=\mu_r g(0)\to f(0)=0$ for $r\to\infty$ and so $\mu_r\to 0$ for $r\to\infty$ (as $g(0)\not=0$). Since $\|\mu_r\sC^{m_r}g\|_\infty\leq |\mu_r|\cdot\|g\|_\infty$ for all $r\in\N$,  it follows that  
$\|f\|_\infty=0$;  a contradiction to $f\not=0$.
\end{proof}

\begin{theorem}\label{T2-C} The Ces\`aro operator  $\sC\colon C_l([0,\infty])\to C_l([0,\infty])$ is power bounded, not hypercyclic and not mean ergodic. Moreover,
\begin{equation}\label{e.range}
\ov{(I-\sC)(C_l([0,\infty]))}=\{f\in C_l([0,\infty])\colon f(0)=f(\infty)=0\}
\end{equation}
\end{theorem}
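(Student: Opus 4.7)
The plan splits naturally into three parts. Power boundedness follows exactly as in Theorem \ref{T1-C}: since $\|\sC\|_{op}=1$ and $\sC\textbf{1}=\textbf{1}$, every iterate satisfies $\|\sC^n\|_{op}=1$, which also rules out hypercyclicity because bounded orbits cannot be dense.

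The bulk of the work is the range identity \eqref{e.range}. Let $Z_\infty := \{f\in C_l([0,\infty]): f(0)=f(\infty)=0\}$. The inclusion $\ov{(I-\sC)(C_l([0,\infty]))}\su Z_\infty$ is immediate since $\sC$ fixes the boundary values and $Z_\infty$ is closed. For the converse, I would solve $(I-\sC)f=g$ explicitly: setting $F(x):=\int_0^x f(t)\,dt$, the equation $f-\sC f=g$ becomes $(F/x)'=g(x)/x$, suggesting the candidate solution
\[
f(x):=g(x)+\int_0^x\frac{g(t)}{t}\,dt,
\]
which satisfies $(I-\sC)f=g$ by a quick Fubini computation. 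I would apply this to the concrete family $g_{j,k}(x):=x^j/(1+x)^{j+k}$ with $j,k\geq 1$: each $g_{j,k}\in Z_\infty$ and $g_{j,k}(t)/t=t^{j-1}/(1+t)^{j+k}$ is integrable on $(0,\infty)$, so the corresponding $f$ lies in $C_l([0,\infty])$. To obtain density of ${\rm span}\{g_{j,k}:j,k\geq 1\}$ in $Z_\infty$, I would use the isometric isomorphism $\Phi\colon C([0,1])\to C_l([0,\infty])$, $(\Phi p)(x):=p(x/(1+x))$, under which the functions $y^j(1-y)^k$ correspond to $g_{j,k}$ and $Z_\infty$ to $\{p\in C([0,1]):p(0)=p(1)=0\}$; the span of $y^j(1-y)^k$ ($j,k\geq 1$) consists of all polynomials divisible by $y(1-y)$, and a Weierstrass-type correction (subtract $P_n(0)(1-y)+P_n(1)y$, in the spirit of Lemma \ref{L1-C}) shows density of this span in $\{p\in C([0,1]):p(0)=p(1)=0\}$; pulling back yields the required identity.

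For the failure of mean ergodicity I would take $f(x):=1/(1+x)$, for which $f(0)=1\neq 0=f(\infty)$. Because $\sC$ preserves values at $0$ and $\infty$, $\sC_{[n]}f(0)=1$ and $\sC_{[n]}f(\infty)=0$ for every $n$. If $\sC_{[n]}f\to h$ uniformly, then $h(0)=1\neq 0=h(\infty)$; but $(I-\sC)\sC_{[n]}f=(\sC^{n+1}-\sC)f/n$ has sup-norm $\leq(2/n)\|f\|_\infty\to 0$, so $(I-\sC)h=0$, contradicting $\Ker(I-\sC)={\rm span}\{\textbf{1}\}$. The main technical point I expect is verifying that the explicit $f=g_{j,k}(\cdot)+\int_0^{(\cdot)}g_{j,k}(t)/t\,dt$ really lies in $C_l([0,\infty])$---continuity at $0$ and the existence of $\lim_{x\to\infty}f(x)$---but the tailor-made choice $j,k\geq 1$ keeps these estimates elementary.
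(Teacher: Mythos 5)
Your proposal is correct, but both of its substantive parts take routes genuinely different from the paper's. For the range identity \eqref{e.range}, the paper works entirely inside $C_l([0,\infty])$: it applies $(I-\sC)$ to the truncated monomials $h_{m,n}(x)=x^n$ on $[0,m]$, $h_{m,n}(x)=m^n$ beyond, obtaining in closed form spanning functions $g_{m,n}$ equal to $x^n$ on $[0,m]$ and to $m^{n+1}/x$ on $[m,\infty)$, and then approximates a given $\psi$ with $\psi(0)=\psi(\infty)=0$ by an $\ve/3$-argument combining Weierstrass on $[0,M]$ with smallness of the tail. You instead invert $(I-\sC)$ explicitly via $f(x)=g(x)+\int_0^x g(t)t^{-1}\,dt$ (your Fubini verification is correct, and for $g_{j,k}(x)=x^j/(1+x)^{j+k}$ with $j,k\geq 1$ the integrand $t^{j-1}/(1+t)^{j+k}$ is indeed in $L^1(0,\infty)$, so $f\in C_l([0,\infty])$ with $f(\infty)$ a Beta integral), and you obtain density by transporting the problem to $C([0,1])$ through the compactification $y=x/(1+x)$, under which your family becomes $y^j(1-y)^k$ and a boundary-corrected Weierstrass argument in the spirit of Lemma \ref{L1-C} applies; this is conceptually slicker, while the paper's construction is more hands-on and avoids any integrability discussion (compare Lemma \ref{L2-C}, which shows such an integrability condition is in fact necessary for membership in the range). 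For the failure of mean ergodicity, the paper invokes the Galaz Fontes--Sol\'{\i}s theorem that $\sC^n f\to f(0)\textbf{1}$ uniformly on each $[0,j]$, forcing any limit of the means to equal $\textbf{1}$ pointwise while vanishing at $\infty$; your argument is more elementary and self-contained: from $(I-\sC)\sC_{[n]}f=\frac{1}{n}(\sC-\sC^{n+1})f\to 0$ (you wrote the opposite sign, which is immaterial), any uniform limit $h$ of $\sC_{[n]}f$ lies in $\Ker(I-\sC)={\rm span}\{\textbf{1}\}$ (established in the paper just before Lemma \ref{L1-C}), incompatible with $h(0)=1$ and $h(\infty)=0$. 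This is close in spirit to the paper's Remark \ref{r.T2-C}, which reaches the same conclusion from \eqref{e.range} and the mean ergodic decomposition, but your version needs neither \eqref{e.range} nor that decomposition. The power-boundedness and non-hypercyclicity parts coincide with the paper's.
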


\begin{proof} For each $n\in\N$, we have $\|\sC^n\|_{op}=1$ and so $\sC$ is power bounded. In particular, $\sC$ is then not hypercyclic.

To prove that $\sC$ is not mean ergodic, select any function $f\in C_l([0,\infty])$ satisfying $f(0)=1$ and $f(\infty)=0$ (eg., $f(x)=(\cos x)/(x+1)$, for $x\in \R^+$). Recall that $\sC^nf(0)=f(0)=1$ and $\sC^nf(\infty)=f(\infty)=0$ for all $n\in\N$.
Observe, for each $j\in\N$, that $\sC^n\textbf{1}=\textbf{1}$ on $[0,j]$ and, for $k\in\N$ fixed, that the sup-norm of $\sC^nx^k$ on $[0,j]$ equals $\frac{j^k}{(k+1)^n}$, for all $n\in\N$, i.e.,  $\sC^nx^k\to 0$ uniformly on $[0,j]$ for $n\to\infty$.
 For each $j\in\N$, let $f_j$ denote the restriction of $f$ to $[0,j]$. The previous observation, together with an examination of the proof of Theorem 3 in \cite{GF-S}, shows that  $\{\sC^nf_j\}_{n=1}^\infty$ converges to the constant function $f(0)\textbf{1}$ uniformly on $[0,j]$. 
Now, suppose that $\sC$ is mean ergodic in $C_l([0,\infty])$.  Then there exists $g\in   C_l([0,\infty])$ such that the sequence $\sC_{[n]}f=\frac{1}{n}\sum_{k=1}^n\sC^kf$ converges to $g$ in $C_l([0,\infty])$ for $n\to\infty$. The functional $u\colon C_l([0,\infty])\to \C$ given by  $u(h):=h(\infty)$ for $h\in C_l([0,\infty])$, is linear and continuous with $|u(h)|\leq \|h\|_\infty$ for all $h\in C_l([0,\infty])$. In particular, $\sC_{[n]}f(\infty)\to g(\infty)$ as $n\to\infty$. But,
\[
\sC_{[n]}f(\infty)=\frac{1}{n}\sum_{k=1}^n\sC^kf(\infty)=\frac{1}{n}\sum_{k=1}^n f(\infty)=f(\infty)=0,
\]
for all $n\in\N$. Accordingly, $g(\infty)=0$. On the other hand, for each $j\in\N$ we have $\sC_{[n]}f_j\to g$ uniformly on $[0,j]$ for $n\to\infty$; this is immediate from $\sC_{[n]}f\to g$ in $C_l([0,\infty])$ for  $n\to\infty$. Since $\sC^nf_j\to \textbf{1}$ uniformly on $[0,j]$ for $n\to\infty$, we have $g=\textbf{1}$ on $[0,j]$. As $j$ is arbitrary, this implies that $g(x)=1$ for all $x\in \R^+$. It follows that $g(\infty)=\lim_{x\to\infty} g(x)=1$; a contradiction. So, $\sC$  is not mean ergodic.

It remains to prove the statement \eqref{e.range}. Clearly, 
\[
Z:=\{f\in C_l([0,\infty])\colon f(0)=f(\infty)=0\}
\]
is closed in $C_l([0,\infty])$ and $(I-\sC)(C_l([0,\infty]))\su Z$. So, $\ov{(I-\sC)(C_l([0,\infty]))}\su Z$.

For each $m,\ n\in\N$, define $h_{m,n}(x)=x^n$ if $x\in [0,m]$ and $h_{m,n}(x)=m^n$ if $x\in [m,\infty)$. Then $(I-\sC)h_{m,n}(x)=\left(1-\frac{1}{n+1}\right)x^n$ if $x\in [0,m]$ and $(I-\sC)h_{m,n}(x)=\left(1-\frac{1}{n+1}\right)\frac{m^{n+1}}{x}$ if $x\in [m,\infty)$. For each $m,\ n\in\N$, let $g_{m,n}:=\frac{n+1}{n}(I-\sC)h_{m,n}$. Then, for every $m,\ n\in\N$, we have $g_{m,n}\in (I-\sC)(C_l([0,\infty]))$ with   $g_{m,n}(x)=x^n$ if $x\in [0,m]$ and $g_{m,n}(x)=\frac{m^{n+1}}{x}$ if $x\in [m,\infty)$. So, ${\rm span}\{g_{m,n}\colon m,\ n\in\N\}\su (I-\sC)(C_l([0,\infty]))$. 

Fix $\psi\in Z$. For each $\ve>0$,  there is $M\in\N$ with $|\psi(x)|\leq\frac{\ve}{3}$ whenever $x\geq M$ (as $\psi(\infty)=0$). By Weierstrass' Theorem there is a polynomial $Q(x)=\sum_{j=1}^ra_jx^j$ with  $Q(0)=0$ such that $|\psi(x)-Q(x)|\leq\frac{\ve}{3}$ for $x\in [0,M]$. Observe that $|Q(M)|\leq |Q(M)-\psi(M)|+|\psi(M)|\leq\frac{2\ve}{3}$. Moreover, the function $h:=\sum_{j=1}^ra_jg_{M,j}$ belongs to $(I-\sC)(C_l([0,\infty]))$ and coincides with $Q$ on $[0,M]$. Now, if $x\in [0,M]$, then
\[
|\psi(x)-h(x)|=|\psi(x)-Q(x)|\leq\frac{\ve}{3}
\]
and, if $x\geq M$, then
\begin{eqnarray*}
& & |\psi(x)-h(x)|=\left|\psi(x)-\sum_{j=1}^ra_j\frac{M^{j+1}}{x}\right|\leq |\psi(x)|+\frac{M}{x}\left|\sum_{j=1}^ra_jM^{j}\right|\\
& & =|\psi(x)|+\frac{M}{x}|Q(M)|\leq\frac{\ve}{3}+\frac{2\ve}{3}=\ve.
\end{eqnarray*}
Accordingly, $\|\psi-h\|_\infty\leq\ve$. It follows that  $\psi\in \ov{(I-\sC)(C_l([0,\infty]))}$. This completes the proof.
\end{proof}

\begin{remark}\label{r.T2-C}\rm  It is possible to use the identity \eqref{e.range} in Theorem \ref{T2-C} to give an alternate proof
 that $\sC$ is not mean ergodic in $C_l([0,\infty])$. Indeed, if $\sC$ is mean ergodic, then $C_l([0,\infty])=\Ker (I-\sC)\oplus \ov{(I-\sC)(C_l([0,\infty]))}$, \cite[Ch. 2]{K}, \cite[Ch. VIII, \S 3, Corollary]{Y}, and so the function $f(x)=(\cos x)/(x+1)\in C_l([0,\infty])$ could be written as $f=c\textbf{1}+g$ with $g(0)=g(\infty)=0$. This implies that  $f(0)=c=f(\infty)$. But, $f(0)=1$ and $f(\infty)=0$ which gives  a contradiction.
\end{remark}

We now identify the spectrum and  point spectrum of the Ces\`aro operator $\sC$ on the Banach spaces $C([0,1])$ and $C_l([0,\infty])$.

\begin{prop}\label{PS1-C} The Ces\`aro operator $\sC\colon C([0,1])\to C([0,1])$ satisfies
\[
\sigma(\sC; C([0,1]))=\left\{\lambda\in \C\colon \left|\lambda-\frac{1}{2}\right|\leq \frac{1}{2}\right\},
\]
and
\[
\sigma_{pt}(\sC; C([0,1]))=\left\{\lambda\in \C\colon \left|\lambda-\frac{1}{2}\right|\leq \frac{1}{2}\right\}\setminus\{0\}.
\]
\end{prop}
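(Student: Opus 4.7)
The strategy is to solve a simple first-order ODE to pin down $\sigma_{pt}(\sC)$ directly, and then obtain $\sigma(\sC)$ by combining the closedness of the spectrum with an explicit resolvent formula outside the disk. For the point spectrum, I would examine $(\lambda I - \sC)f = 0$, i.e.\ $\lambda x f(x) = \int_0^x f(t)\,dt$ for $x \in (0,1]$, together with the induced boundary condition $(\lambda - 1) f(0) = 0$. The case $\lambda = 0$ forces $\int_0^x f(t)\,dt = 0$ for all $x$, hence $f \equiv 0$, so $0 \notin \sigma_{pt}(\sC)$. For $\lambda \neq 0$, the right-hand side is $C^1$, so $f$ is automatically $C^1$ on $(0,1]$ and differentiation gives the linear ODE
\[
\lambda x f'(x) = (1 - \lambda) f(x), \qquad x \in (0,1].
\]
Its non-trivial solutions are $f_\lambda(x) := c\, x^{(1-\lambda)/\lambda}$, and the task reduces to deciding for which $\lambda$ the function $f_\lambda$ extends continuously to $[0,1]$. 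Controlling $|x^{(1-\lambda)/\lambda}| = x^{\RR((1-\lambda)/\lambda)}$ as $x \to 0^+$ pins this down to $\RR((1-\lambda)/\lambda) \geq 0$, and a short algebraic manipulation identifies this with $\RR(1/\lambda) \geq 1$, equivalent to $|\lambda - 1/2| \leq 1/2$. Removing the forbidden value $\lambda = 0$ yields the stated $\sigma_{pt}(\sC)$.

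For $\sigma(\sC)$, the inclusion of the closed disk is free once the point spectrum is known, since $\sigma_{pt}(\sC) \subseteq \sigma(\sC)$ and $\sigma(\sC)$ is closed. For the reverse inclusion, given $\lambda$ with $|\lambda - 1/2| > 1/2$ (equivalently $\RR(1/\lambda) < 1$) and $g \in C([0,1])$, I would solve $(\lambda I - \sC) f = g$ explicitly. Setting $F(x) := \int_0^x f(t)\,dt$ converts the equation to the first-order linear ODE $F'(x) - F(x)/(\lambda x) = g(x)/\lambda$ with $F(0) = 0$; multiplying by the integrating factor $x^{-1/\lambda}$ and integrating yields
\[
f(x) = \frac{g(x)}{\lambda} + \frac{1}{\lambda^2}\, x^{(1-\lambda)/\lambda} \int_0^x t^{-1/\lambda} g(t)\,dt,
\]
which is well defined precisely because $\RR(1/\lambda) < 1$ makes $t^{-1/\lambda}$ integrable near $0$ and makes the second term extend continuously to $x = 0$ (yielding $f(0) = g(0)/(\lambda - 1)$, consistent with the original equation at the origin). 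A sup-norm estimate on this formula then shows that $(\lambda I - \sC)^{-1}$ is bounded on $C([0,1])$, placing $\lambda$ in the resolvent set and completing the containment $\sigma(\sC) \subseteq \overline{D(1/2,1/2)}$.

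The main obstacle I anticipate is the boundary analysis on the critical circle $|\lambda - 1/2| = 1/2$: there $(1-\lambda)/\lambda$ is purely imaginary and the candidate eigenfunction $x^{(1-\lambda)/\lambda} = e^{i\alpha \log x}$ is bounded but oscillates near $0$, so deciding continuous extendability to $[0,1]$ requires genuine care and careful bookkeeping of the cases $\lambda = 0$, $\lambda = 1$, and the remainder of the circle. The secondary technical point is the sup-norm estimate for the singular integral in the resolvent formula as $\lambda$ approaches this circle from outside, which is needed to establish boundedness of $(\lambda I - \sC)^{-1}$ on $C([0,1])$ rather than merely a pointwise formula.
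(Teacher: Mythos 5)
Your overall route is in substance the paper's own: the Euler equation $\lambda x f'(x)=(1-\lambda)f(x)$ is exactly what underlies the paper's choice of eigenfunctions $g_\lambda(x)=x^{\frac{1}{\lambda}-1}$, and your explicit resolvent
\[
f(x)=\frac{g(x)}{\lambda}+\frac{1}{\lambda^{2}}\,x^{\frac{1}{\lambda}-1}\int_0^x t^{-\frac{1}{\lambda}}g(t)\,dt
\]
is precisely the operator $\frac{1}{\lambda}I+\frac{1}{\lambda^{2}}P_{1/\lambda}$ with $P_\xi f(x)=\int_0^1 s^{-\xi}f(xs)\,ds$ (substitute $t=xs$), which the paper imports from Boyd and Leibowitz on $L^\infty([0,1])$ and then restricts to the invariant subspace $C([0,1])$ via dominated convergence. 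Your derivation is self-contained where the paper cites, and for a \emph{fixed} $\lambda$ with $\RR(1/\lambda)<1$ the sup-norm bound is immediate, since $|x^{\frac1\lambda-1}\int_0^x t^{-\frac1\lambda}g(t)\,dt|\le \|g\|_\infty/(1-\RR(1/\lambda))$; so your second anticipated obstacle (estimates as $\lambda$ approaches the critical circle) never arises --- no uniform control in $\lambda$ is needed to place a single exterior point in the resolvent set. Your treatment of $\lambda=0$ (injective, and $0\in\sigma$ by closedness of the spectrum) matches the paper's, which instead notes directly that $\sC$ is not surjective.

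The genuine gap is the one you flagged yourself, and it is fatal to the boundary part of the claim rather than a technicality to be nursed through. Your criterion ``$|x^{(1-\lambda)/\lambda}|=x^{\RR((1-\lambda)/\lambda)}$ controlled near $0$, hence $\RR((1-\lambda)/\lambda)\ge 0$ suffices'' confuses boundedness of the modulus with continuous extendability. For $|\lambda-\frac12|=\frac12$ with $\lambda\ne 1$ one has $\frac{1}{\lambda}-1=i\beta$, $\beta\in\R\setminus\{0\}$, and $x^{i\beta}=e^{i\beta\log x}$ has no limit as $x\to 0^{+}$; moreover, every solution of the eigen-equation on $(0,1]$ is a multiple of $x^{i\beta}$, and since $\lambda\ne1$ the boundary condition you correctly recorded forces $f(0)=0$, while $|f|\equiv |c|$ on $(0,1]$; continuity at $0$ then forces $c=0$. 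So these boundary points are \emph{not} eigenvalues, and the honest completion of your argument yields $\sigma_{pt}(\sC;C([0,1]))=\{\lambda\colon |\lambda-\frac12|<\frac12\}\cup\{1\}$, not the closed disk minus $\{0\}$. You should know that the paper's own proof contains exactly this gap: it asserts without justification that $g_\lambda=x^{\frac1\lambda-1}$ belongs to $C([0,1])$ for \emph{every} nonzero $\lambda$ in the closed disk, which fails on the boundary circle minus $\{1\}$; so the second display of the Proposition is not established by the paper either. The first display survives unchanged under your approach: the open disk of eigenvalues is dense in the closed disk, $\sigma(\sC)$ is closed in the Banach setting, and your resolvent formula disposes of the exterior, giving $\sigma(\sC;C([0,1]))=\left\{\lambda\colon \left|\lambda-\frac12\right|\le\frac12\right\}$.
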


\begin{proof} It is routine to check that $\sC$ is injective on $C([0,1])$. Also, $\sC$  is not surjective (the range of $\sC$ contains only  continuously differentiable functions on $(0,1]$).  Hence, $0\in \sigma(\sC; C([0,1]))\setminus\sigma_{pt}(\sC; C([0,1]))$. If $\lambda\in \C\setminus\{0\}$ satisfies  $\left|\lambda-\frac{1}{2}\right|\leq \frac{1}{2}$, then the function $g_\lambda(x):=x^{\frac{1}{\lambda}-1}$, for $x\in [0,1]$, belongs to $C([0,1])$ and $\sC g_\lambda=\lambda g_\lambda$, i.e., $\lambda\in \sigma_{pt}(\sC; C([0,1]))$. If $\lambda\in \C$ satisfies  $\left|\lambda-\frac{1}{2}\right|> \frac{1}{2}$ (equivalently, ${\rm Re}\left(\frac{1}{\lambda}\right)<1$), then for $\xi:=\frac{1}{\lambda}$ the linear map
\[
P_\xi f(x):=\int_0^1 s^{-\xi}f(xs)\, ds,\quad x\in [0,1],
\]
is a bounded operator on $L^\infty([0,1])$ with the property that   $\xi I+\xi^2P_\xi$ is the inverse of $(\lambda I-\sC)$ on $L^\infty([0,1])$; see \cite{B} and the comments on p.29 of \cite{Le}. By the dominated convergence theorem applied to calculating $\lim_{n\to\infty}P_\xi f(x_n)$ whenever  $f\in C([0,1])$ and $x_n\to x$ in $[0,1]$ for $n\to\infty$, it follows that  $P_\xi f\in C([0,1])$ whenever $f\in C([0,1])$, i.e., $\xi I+\xi^2P_\xi$ restricted to the closed invariant subspace $C([0,1])$ of $L^\infty([0,1])$ is the inverse of  $(\lambda I-\sC)$ restricted from  $L^\infty([0,1])$ to $C([0,1])$. This implies that $\lambda\not\in \sigma(\sC;C([0,1]))$. So, the proof is complete. 
\end{proof}

\begin{prop}\label{PS2-C}The Ces\`aro operator $\sC\colon C_l([0,\infty])\to C_l([0,\infty])$ satisfies
\[
\sigma(\sC; C_l([0,\infty]))=\left\{\lambda\in \C\colon \left|\lambda-\frac{1}{2}\right|= \frac{1}{2}\right\}
\]
and
\[
\sigma_{pt}(\sC; C_l([0,\infty]))=\{1\}.
\]
\end{prop}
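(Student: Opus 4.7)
I plan to prove the proposition in three stages: identify $\sigma_{pt}(\sC)$ from the eigenvalue ODE; show the resolvent set contains $\C$ minus the circle $\{|\lambda - 1/2| = 1/2\}$ by exhibiting explicit resolvent formulas on both the exterior of the closed disc and its open interior; and finally show the whole circle belongs to $\sigma(\sC)$ by constructing approximate eigenvectors after a logarithmic change of variables.

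For $\sigma_{pt}(\sC) = \{1\}$, arguing as in the proof of Proposition \ref{PS1-C}, the operator $\sC$ is injective so $0 \notin \sigma_{pt}(\sC)$, and for $\lambda \neq 0$ the equation $\sC f = \lambda f$ forces $f(x) = c x^{1/\lambda - 1}$. Such a function lies in $C_l([0,\infty])$ only when $1/\lambda - 1 = 0$: for any nonzero complex exponent $\alpha + i\beta$ the power $x^{\alpha + i\beta}$ either blows up or (in the purely imaginary case) oscillates at one of the endpoints $0,\infty$, so continuity at $0$ together with a finite limit at $\infty$ can hold only for the zero exponent. Hence $\lambda = 1$ is the unique eigenvalue.

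For the resolvent, the exterior $\{|\lambda - 1/2| > 1/2\}$ (equivalently $\RR(1/\lambda) < 1$) is handled exactly as in Proposition \ref{PS1-C}: with $\xi := 1/\lambda$, the operator $P_\xi f(x) := \int_0^1 s^{-\xi} f(xs)\,ds$ maps $C_l([0,\infty])$ continuously to itself by dominated convergence (at $x = 0$ directly, and at $x = \infty$ giving $P_\xi f(\infty) = f(\infty)/(1 - \xi)$), and $\xi I + \xi^2 P_\xi$ inverts $\lambda I - \sC$. The new feature for $C_l([0,\infty])$ (absent for $C([0,1])$) is that the open disc $\{|\lambda - 1/2| < 1/2\}$ also lies in the resolvent set. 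For such $\lambda$, I set $F(x) := \int_0^x f(t)\,dt$, rewrite $(\lambda I - \sC)f = g$ as the linear ODE $\lambda F'(x) - F(x)/x = g(x)$, apply the integrating factor $x^{-1/\lambda}$, and integrate from $\infty$ (legitimate because $|x^{-1/\lambda} F(x)| = O(x^{1 - \RR(1/\lambda)}) \to 0$ here). Differentiating the resulting $F$ yields the closed formula $(\lambda I - \sC)^{-1} g = g/\lambda - \lambda^{-2} Q_\xi g$, where
\[
Q_\xi g(x) := \int_1^\infty s^{-\xi} g(xs)\,ds,
\]
and the integral converges since $\RR(\xi) > 1$; dominated convergence again shows $Q_\xi \in \cL(C_l([0,\infty]))$.

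The main obstacle is showing the entire circle lies in $\sigma(\sC)$. The plan is to change variables $y := \log x$, $u(y) := f(e^y)$, which identifies $C_l([0,\infty])$ isometrically with $\wt C(\R) := \{u \in C(\R) : \lim_{y \to \pm\infty} u(y) \text{ exists}\}$ (sup-norm), and transforms $\sC$ into convolution on $\R$ with the kernel $\kappa(w) := e^{-w}\chi_{[0,\infty)}(w)$. Its Fourier symbol $\wh\kappa(\xi) = 1/(1 + i\xi)$ traces out the punctured circle $\{|\lambda - 1/2| = 1/2\} \setminus \{0\}$ as $\xi$ runs over $\R$, and for each such $\xi$ the bounded function $e^{i\xi y}$ (not in $\wt C(\R)$) is a formal eigenvector of the convolution with eigenvalue $\wh\kappa(\xi)$. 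Multiplying by a slowly varying cutoff $u_n(y) := \phi(y/n) e^{i\xi y}$, with $\phi \in C^1_c(\R)$ fixed of sup-norm $1$, produces $u_n \in \wt C(\R)$ with $\|u_n\|_\infty = 1$, and a short estimate using $|\phi(a) - \phi(b)| \leq \|\phi'\|_\infty |a-b|$ gives
\[
\|\kappa * u_n - \wh\kappa(\xi) u_n\|_\infty \leq \frac{\|\phi'\|_\infty}{n}\int_0^\infty w e^{-w}\,dw = \frac{\|\phi'\|_\infty}{n} \longrightarrow 0.
\]
Hence every $\wh\kappa(\xi)$ is an approximate eigenvalue of $\sC$, and the closedness of $\sigma(\sC)$ forces it to contain the whole circle. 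Combined with the two resolvent computations this gives $\sigma(\sC) = \{|\lambda - 1/2| = 1/2\}$, completing the proof.
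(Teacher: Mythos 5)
Your proposal is correct, but at the two substantive points it takes a genuinely different route from the paper's. For the inclusion of the complement of the circle in the resolvent set, the paper simply quotes Boyd's $L^\infty(\R^+)$ resolvent formulas and checks, via dominated convergence, that $P_\xi$ (for $\RR(\xi)<1$) and $Q_\xi$ (for $\RR(\xi)>1$) leave the closed invariant subspace $C_l([0,\infty])$ invariant; you instead re-derive the formulas from the integrating-factor ODE argument, and your sign is in fact the right one: the inverse in the interior case is $\lambda^{-1}I-\lambda^{-2}Q_{1/\lambda}$ (test on $g=\mathbf{1}$ with $\lambda=1/2$: your formula gives $2-4=-2=(\lambda-1)^{-1}$), whereas the paper's displayed formula with $+\lambda^{-2}Q_{1/\lambda}$ is a sign slip --- harmless there, since the paper only uses invertibility and the norm of $P_\xi$. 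The real divergence is the hard inclusion (circle $\su\sigma(\sC)$): the paper computes the exact norm $\|P_\xi\|_{op}=(1-\RR(\xi))^{-1}$, including complex $\xi$ via the cutoff functions $g_\ve$, and gets a contradiction from the blow-up of $\|(\lambda_n I-\sC)^{-1}\|_{op}$ against the continuity of the resolvent map on the open set $\rho(\sC)$, following Boyd; you conjugate by $x=e^y$, turning $\sC$ into convolution with $e^{-w}\chi_{[0,\infty)}(w)$ on the isometric copy of $C_l([0,\infty])$ inside $C(\R)$, and build Weyl sequences $\phi(y/n)e^{i\xi y}$, so that each point $1/(1+i\xi)$ of the punctured circle lies in the approximate point spectrum; closedness of the spectrum (a Banach space fact, available here) then picks up $0$, which you could equally get from non-surjectivity as the paper does. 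Your route buys a stronger conclusion (approximate point spectrum, not merely spectrum) at the cost of only the elementary estimate $\int_0^\infty we^{-w}\,dw=1$, with no operator-norm computation; the paper's route is shorter granted Boyd's results and yields the quantitative norm $\|P_\xi\|_{op}$ as a by-product. The point-spectrum arguments coincide (Euler ODE, and $x^{1/\lambda-1}\in C_l([0,\infty])$ only for $\lambda=1$). The one detail you should make explicit is that the ODE derivation a priori only proves injectivity and uniqueness of the candidate inverse: for surjectivity you must verify, for every $g\in C_l([0,\infty])$, that $f:=\lambda^{-1}g-\lambda^{-2}Q_\xi g$ actually solves $(\lambda I-\sC)f=g$ (a routine Fubini computation, with continuity at $0$ and the limit $g(\infty)/(\xi-1)$ at infinity supplied by dominated convergence, exactly as in the paper's treatment of $Q_\xi$).
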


\begin{proof} Since $\sC\textbf{1}=\textbf{1}$, we have $1\in \sigma_{pt}(\sC; C_l([0,\infty]))$. The same argument given in the proof of Proposition \ref{PS1-C} yields that $0\in \sigma(\sC; C_l([0,\infty]))\setminus \sigma_{pt}(\sC; C_l([0,\infty]))$. If $f\in C_l([0,\infty])$ satisfies $\sC f=\lambda f$ for some $\lambda\not=0$, then $f$ is continuously differentiable in $(0,\infty)$ and is a solution of the 1-st order Euler differential equation $\lambda xy'(x)+(\lambda -1)y(x)=0$. But, every solution of this ODE has the form $f(x)=\beta x^{\frac{1}{\lambda}-1}$,  $x\in \R^+$, for some  $\beta\in\C$.  Since $x^\alpha\not\in C_l([0,\infty])$ unless $\alpha=0$,  we conclude that necessarily $\lambda=1$. Thus $\sigma_{pt}(\sC; C_l([0,\infty]))=\{1\}$.

Now we prove that   
\[
\sigma(\sC; C_l([0,\infty]))=\left\{\lambda\in \C\colon \left|\lambda-\frac{1}{2}\right|= \frac{1}{2}\right\}=\{\lambda\in\C\colon {\rm Re}\left(\frac{1}{\lambda}\right)=1\}.
\]
For this we recall that D. Boyd proved the following results, \cite[Theorem 1]{B}:
\begin{enumerate}
\item If ${\rm Re}(\xi)<1$, then $P_\xi f(x):=\int_0^1s^{-\xi}f(xs)\, ds$, for  $x\in \R^+$, defines a continuous linear operator on $L^\infty(\R^+)$. Moreover, 
if $\lambda\not=0$ and  ${\rm Re}\left(\frac{1}{\lambda}\right)<1$, then
\[
(\lambda I-\sC)^{-1}=(\lambda^{-1}I+\lambda^{-2}P_{1/\lambda}), \ {\rm\ in\ } \cL(L^\infty(\R^+)).
\] 
\item If ${\rm Re}(\xi)>1$, then $Q_\xi f(x):=\int_1^\infty s^{-\xi} f(xs)\,ds$, for $x\in \R^+$, defines a continuous linear operator on $L^\infty(\R^+)$. Moreover,
if $\lambda\not=0$ and ${\rm Re}\left(\frac{1}{\lambda}\right)>1$, then
\[
(\lambda I-\sC)^{-1}=(\lambda^{-1}I+\lambda^{-2}Q_{1/\lambda}), \ {\rm\ in\ } \cL(L^\infty(\R^+)).
\]
\end{enumerate}
To show that 
\begin{equation}\label{eq.prop-1}
\sigma(\sC; C_l([0,\infty]))\su \left\{\lambda\in\C\colon {\rm Re}\left(\frac{1}{\lambda}\right)=1\right\},
\end{equation}
we first observe, via the dominated convergence theorem (as applied in the proof of Proposition \ref{PS1-C}), that if $f$ is bounded and continuous on $\R^+$, then also $P_\xi f$, for ${\rm Re}(\xi)<1$, and $Q_\xi f$, for ${\rm Re}(\xi)>1$, are bounded and  continuous functions on $\R^+$. So, the proof of  \eqref{eq.prop-1} will follow if we can  show,  for each $f\in C_l([0,\infty])$, that $P_\xi f\in C_l([0,\infty])$ whenever ${\rm Re}(\xi)<1$ and that $Q_\xi f\in C_l([0,\infty])$ whenever ${\rm Re}(\xi)>1$. To see this, fix $f\in C_l([0,\infty])$. Take first $\xi\in\C$ with ${\rm Re}(\xi)<1$. Fix a sequence $\{x_n\}_{n=1}^\infty\su (0,\infty)$ such that $x_n\to \infty$. Then $f(x_n s)\to f(\infty)$,  for every  fixed $s\in (0,1)$ as $n\to\infty$, and $|s^{-\xi}f(x_ns)|\leq s^{-{\rm Re}(\xi)}\|f\|_\infty$ for all $s\in (0,1)$ and $n\in\N$ with $s^{- {\rm Re}(\xi)}\in L^1([0,1])$ as ${\rm Re}(\xi)<1$. Then the dominated convergence theorem implies that $P_\xi f(x_n)=\int_0^1 s^{-\xi} f(x_ns)\,ds\to \int_0^1 s^{-\xi}f(\infty)\,ds$ as $n\to\infty$. Since the sequence $\{x_n\}_{n=1}^\infty$ is arbitrary, it follows that $\lim_{x\to\infty}P_\xi f(x)$ exists and is equal to $\frac{f(\infty)}{1-\xi}$. So, $P_\xi f\in C_l([0,\infty])$.

Now, let  $\xi\in\C$ with ${\rm Re}(\xi)>1$. Fix again a sequence $\{x_n\}_{n=1}^\infty\su (0,\infty)$ such that $x_n\to \infty$. Then $f(x_n s)\to f(\infty)$,   for each fixed $s\in (1,\infty)$ as $n\to\infty$, and $|s^{-\xi}f(x_ns)|\leq s^{-{\rm Re}(\xi)}\|f\|_\infty$  for all  $s\in (1,\infty)$ and $n\in\N$ with $s^{-{\rm Re}(\xi)}\in L^1((1,\infty))$ as ${\rm Re}(\xi)>1$. Then the dominated convergence theorem implies that $Q_\xi f(x_n)=\int_1^\infty s^{-\xi} f(x_ns)\,ds\to \int_1^\infty s^{-\xi}f(\infty)\,ds$ as $n\to\infty$. Since the sequence $\{x_n\}_{n=1}^\infty$ is arbitrary, it follows that $\lim_{x\to\infty}Q_\xi f(x)$ exists and is equal to $\frac{f(\infty)}{\xi -1}$. So, $Q_\xi f\in C_l([0,\infty])$.

We will also require  the fact
that $\|P_\xi\|_{op}=\frac{1}{1-{\rm Re}(\xi)}$ for each $\xi\in\C$ with ${\rm Re}(\xi)<1$. To verify this, fix $\xi\in\C$ with ${\rm Re}(\xi)<1$. If $f\in C_l([0,\infty])$ and $x\in \R^+$, then it follows from the definition of $P_\xi f$ that  $|P_\xi f(x)|\leq \frac{1}{1-{\rm Re}(\xi)}\|f\|_\infty$. So, $\|P_\xi\|_{op}\leq \frac{1}{1-{\rm Re}(\xi)}$. Since $P_\xi \textbf{1}(x)=\frac{1}{1-\xi}$, for $x\in \R^+$, we are done if $\xi\in\R$ with $\xi<1$. For the remaining case, assume $\xi=\alpha+i\beta$, with $\beta\not=0$ and $\alpha<1$.  For given $\ve>0$, define $g_\ve(x):=x^{i\beta+\ve}$ if $x\in (0,1]$, $g_\ve(0):=0$ and $g_\ve(x):=1$ if $x\geq 1$. Then $g_\ve\in C_l([0,\infty])$ and $\|g_\ve\|_\infty=1$.  Moreover, 
\[
P_\xi g_\ve (1)=\int_0^1 s^{-\xi} g_\ve(s)\, ds=\int_0^1 s^{-\alpha+\ve}\, ds=\frac{1}{1-\alpha+\ve}.
\] 
So, $\|P_\xi g_\ve\|_\infty\geq \frac{1}{1-\alpha+\ve}$. Hence, $\|P_\xi \|_{op}\geq \sup_{\ve >0}\frac{1}{1-\alpha+\ve}=\frac{1}{1-\alpha}=\frac{1}{1-{\rm Re}(\xi)}$ and we can conclude, as stated, that $\|P_\xi \|_{op}= \frac{1}{1-{\rm Re}(\xi)}$.
 
We complete the proof of $\sigma(\sC; C_l([0,\infty]))=\{\lambda\in\C\colon {\rm Re}\left(\frac{1}{\lambda}\right)=1\}$ by applying an argument of  Boyd, \cite[p.34]{B}. Suppose there exists $\lambda_0\in\rho(\sC; C_l([0,\infty]))$ with $\lambda_0\in \C\setminus\{0,1\}$ and satisfying ${\rm Re}\left(\frac{1}{\lambda_0}\right)=1$. Select a sequence $\{\lambda_n\}_{n=1}^\infty\su \C$ such that $\lambda_n\to\lambda_0$ for $n\to\infty$ and ${\rm Re}\left(\frac{1}{\lambda_n}\right)<1$ for all $n\in\N$. For each $n\in\N$, set $\xi_n:=\frac{1}{\lambda_n}$. Then
\[
\|(\lambda_n I-\sC)^{-1}\|_{op}=\|\xi_n I+\xi_n^2 P_{\xi_n}\|_{op}\geq |\xi_n|^2\frac{1}{1-{\rm Re}(\xi_n)}-|\xi_n|
\]
for every $n\in\N$. Since ${\rm Re}(\xi_n)\to 1$ as $n\to\infty$, it follows that $\|(\lambda_n I-\sC)^{-1}\|_{op}\to \infty$ for $n\to\infty$. This is a contradiction because the resolvent set  $\rho(\sC; C_l([0,\infty]))$ is open in $\C$ and the resolvent map $\lambda\mapsto (\lambda I-\sC)^{-1}$ is   continuous from $\rho(\sC; C_l([0,\infty]))$ into $\cL_b(C_l([0,\infty]))$. So, no such $\lambda_0$ exists.
\end{proof}

\begin{prop}\label{P-C} The Ces\`aro operator $\sC\colon C_l([0,\infty])\to C_l([0,\infty])$ is not supercyclic.
\end{prop}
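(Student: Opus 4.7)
The plan mirrors the argument used in Proposition~\ref{P.8}, but now exploits \emph{two} independent fixed covectors of $\sC'$ rather than one. The crucial observation is that $\sC f(0)=f(0)$ and $\sC f(\infty)=f(\infty)$ for every $f\in C_l([0,\infty])$, so both point-evaluation functionals $\delta_0$ and $\delta_\infty$ belong to $(C_l([0,\infty]))'$ and satisfy $\sC'\delta_0=\delta_0$ and $\sC'\delta_\infty=\delta_\infty$. These two linearly independent invariants of every $\sC$--orbit will preclude supercyclicity.

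Proceeding by contradiction, I would suppose that $g\in C_l([0,\infty])$ is supercyclic for $\sC$. Then for each target $f\in C_l([0,\infty])$ there exist $\lambda_k\in\C$ and $n_k\in\N_0$ with $\lambda_k\sC^{n_k}g\to f$ uniformly on $[0,\infty]$. Evaluating at $0$ and at $\infty$, and using $\sC^{n_k}g(0)=g(0)$ and $\sC^{n_k}g(\infty)=g(\infty)$, this yields the two scalar limits
\[
\lambda_k g(0)\to f(0), \qquad \lambda_k g(\infty)\to f(\infty).
\]

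The rest is a short case analysis. If $g(0)=g(\infty)=0$, the choice $f=\textbf{1}$ gives $0\to 1$, a contradiction. If exactly one of $g(0),g(\infty)$ vanishes, say $g(\infty)=0\neq g(0)$, then picking any $f$ with $f(\infty)\neq 0$ yields $0=\lambda_k g(\infty)\to f(\infty)\neq 0$, again a contradiction (and symmetrically when $g(0)=0\neq g(\infty)$). Finally, if both $g(0)\neq 0$ and $g(\infty)\neq 0$, then $\{\lambda_k\}$ must converge simultaneously to $f(0)/g(0)$ and to $f(\infty)/g(\infty)$, imposing the linear relation $f(0)g(\infty)=f(\infty)g(0)$ on \emph{every} $f\in C_l([0,\infty])$. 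This is absurd: for instance, $f(x)=1/(x+1)$ has $f(0)=1$, $f(\infty)=0$, so that $f(0)g(\infty)=g(\infty)\neq 0=f(\infty)g(0)$.

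No step here is expected to be delicate; the argument is essentially a streamlined version of the $C([0,1])$ proof, the only new ingredient being the second fixed covector $\delta_\infty$. The only place one must be mildly careful is in enumerating the four cases according to whether $g$ vanishes at $0$ and/or at $\infty$.
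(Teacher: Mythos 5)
Your proof is correct, but it takes a genuinely different route from the paper's. The paper disposes of Proposition \ref{P-C} in two lines by repeating the argument of Proposition \ref{P.8} verbatim: only the single fixed covector $\delta_0$ is used, first to force $g(0)\neq 0$ from the target $\textbf{1}$, then to force $\mu_r\to 0$ for a target $f\neq 0$ with $f(0)=0$ (the paper replaces $f(x)=x$ by $f(x)=x$ on $[0,1]$ and $f(x)=1$ for $x\geq 1$, so that $f\in C_l([0,\infty])$), and the final contradiction $\|f\|_\infty=0$ is extracted from the power boundedness $\|\sC^{m_r}\|_{op}=1$ via $\|\mu_r\sC^{m_r}g\|_\infty\leq |\mu_r|\cdot\|g\|_\infty$. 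You instead bring in the second fixed covector $\delta_\infty$ (whose continuity, $|h(\infty)|\leq\|h\|_\infty$, the paper itself exploits in the proof of Theorem \ref{T2-C}) and reach the contradiction purely by evaluating the approximating sequences at $0$ and at $\infty$; no norm estimate on the iterates $\sC^n$ enters anywhere. Your four cases are all correctly closed; in fact the case analysis can be compressed, since $\lambda_k g(0)\to f(0)$ and $\lambda_k g(\infty)\to f(\infty)$ give $f(0)g(\infty)=f(\infty)g(0)$ for \emph{every} $f$ directly (multiply the first limit by $g(\infty)$ and the second by $g(0)$), whereupon $f(x)=1/(x+1)$ and $f=\textbf{1}$ force $g(0)=g(\infty)=0$, and then $f=\textbf{1}$ yields $0\to 1$. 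Conceptually your argument is the concrete instance of the general principle that the adjoint of a supercyclic operator cannot have two linearly independent eigenvectors (here $\sC'\delta_0=\delta_0$ and $\sC'\delta_\infty=\delta_\infty$). What each approach buys: the paper's route is shortest given that Proposition \ref{P.8} is already proved, but it genuinely needs $\sup_n\|\sC^n\|_{op}<\infty$; yours is self-contained and would apply unchanged to any operator on $C_l([0,\infty])$ fixing both evaluations, regardless of the growth of its iterates.
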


\begin{proof} The argument is similar to the one used in the proof of Proposition \ref{P.8}. One replaces the continuous function $f$ used there (i.e., $f(x)=x$, for $x\in [0,1]$) with $f\in C_l([0,\infty])$ given by $f(x)=x$ if $x\in [0,1]$ and $f(x)=1$ if $x\geq 1$, say.
\end{proof}

\section{The Ces\`aro operator on the Banach spaces $L^p([0,1])$ and $L^p(\R^+)$}

We now study  the Ces\`aro operator $\sC$, as given by \eqref{eq.I-1},
on the Banach spaces $L^p([0,1])$ and $L^p(\R^+)$, $1<p<\infty$. 
Hardy's inequality, \cite[p.240]{HLP}, ensures that both of the linear maps
$\sC\colon L^p([0,1])\to L^p([0,1])$ and $\sC\colon L^p(\R^+)\to L^p(\R^+)$, $1<p<\infty$, are continuous with operator norm $\|\sC\|_{op}=q$, where  $\frac{1}{p}+\frac{1}{q}=1$. 

In the sequel, the \textit{spectral radius} of an operator $T$ acting on a Banach space $X$ is denoted by $r(T):=\sup\{|\lambda|\colon \lambda\in \sigma(T)\}$. Recall that always $r(T)\leq \|T\|_{op}$, \cite[Ch. VIII, Theorems 2.3 and 2.4]{Y}.

\begin{theorem}\label{T1-L} The Ces\`aro operator $\sC\colon L^p([0,1])\to L^p([0,1])$, $1<p<\infty$, is not power bounded and not mean ergodic. On the other hand, it is hypercyclic, chaotic and satisfies
\[
\sigma(\sC;L^p([0,1]))=\left\{\lambda\in \C\colon \left|\lambda-\frac{q}{2}\right|\leq \frac{q}{2}\right\}
\]
and
\[
\sigma_{pt}(\sC;L^p([0,1]))=\left\{\lambda\in \C\colon \left|\lambda-\frac{q}{2}\right|< \frac{q}{2}\right\}.
\]
\end{theorem}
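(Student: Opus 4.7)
My approach organises everything around the explicit eigenvector family
\[
g_\lambda(x) := x^{1/\lambda - 1}, \quad x \in (0,1], \qquad \lambda \in \C \setminus \{0\},
\]
which formally satisfies $\sC g_\lambda = \lambda g_\lambda$, exactly as used for $C([0,1])$ and $C_l([0,\infty])$ in Propositions \ref{PS1-C} and \ref{PS2-C}. For the point spectrum I would first check that $g_\lambda \in L^p([0,1])$ precisely when $p\,{\rm Re}(1/\lambda - 1) > -1$, i.e.\ ${\rm Re}(1/\lambda) > 1/q$, which is the open disc $|\lambda - q/2| < q/2$. Conversely, if $\sC f = \lambda f$ in $L^p$ with $\lambda \neq 0$, then $f = \lambda^{-1}\sC f$ admits a continuous representative on $(0,1]$, and the Euler-equation manipulation used at the start of Section 2 forces $f$ to be a scalar multiple of $g_\lambda$; injectivity of $\sC$ on $L^p$ excludes $\lambda = 0$. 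This yields $\sigma_{pt}(\sC; L^p([0,1])) = \{|\lambda - q/2| < q/2\}$.

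For the full spectrum I would mimic Proposition \ref{PS1-C}: with $\xi := 1/\lambda$, Minkowski's integral inequality shows that $P_\xi f(x) := \int_0^1 s^{-\xi} f(xs)\,ds$ defines a bounded operator on $L^p([0,1])$ precisely when ${\rm Re}(\xi) + 1/p < 1$, equivalently $|\lambda - q/2| > q/2$. For such $\lambda$ Boyd's formula $(\lambda I - \sC)^{-1} = \xi I + \xi^2 P_\xi$ provides the inverse, so the complement of the closed disc lies in $\rho(\sC)$. Combined with the point-spectrum inclusion and the closedness of $\sigma(\sC)$, this gives $\sigma(\sC; L^p([0,1])) = \{|\lambda - q/2| \leq q/2\}$, whose spectral radius $q > 1$ immediately rules out power boundedness via Gelfand's formula. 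For the failure of mean ergodicity I would invoke the standard telescoping identity $\sC^n x/n = \sC_{[n]} x - \tfrac{n-1}{n}\sC_{[n-1]} x$, which forces $\sC^n x/n \to 0$ for every $x$ whenever $\sC$ is mean ergodic; taking any real $\lambda \in (1,q)$ and $x = g_\lambda$ gives $\|\sC^n g_\lambda\|_p / n = \lambda^n \|g_\lambda\|_p / n \to \infty$, the desired contradiction.

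For hypercyclicity and chaos I would either cite \cite{LPS} as signalled in the introduction, or reprove using the Godefroy--Shapiro criterion now that $\sigma_{pt}(\sC)$ is in hand. The criterion requires density of the linear spans of $\{g_\lambda\}$ over (i) $\lambda$ in the open disc with $|\lambda| < 1$, (ii) $\lambda$ in the open disc with $|\lambda| > 1$, and (iii) roots of unity in the open disc. Since $q > 1$ guarantees that $\lambda = 1$ and a neighborhood thereof lie in the disc, all three parameter sets carry, under $\lambda \mapsto \alpha := 1/\lambda - 1$, into sets with an accumulation point inside the half-plane $\{{\rm Re}(\alpha) > -1/p\}$; in case (iii), the roots of unity $e^{2\pi i/m}$ accumulate at $\lambda = 1$, sending $\alpha_m \to 0$. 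Density then reduces by Hahn--Banach to showing that any $h \in L^{p'}$ satisfying $F(\alpha) := \int_0^1 x^\alpha h(x)\,dx = 0$ on the accumulation set must vanish: $F$ is holomorphic on the half-plane, hence identically zero, and $F(n) = 0$ for all $n \in \N_0$ combined with the density of polynomials in $L^p([0,1])$ yields $h = 0$.

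The main delicate point will be the analyticity--accumulation step for case (iii), where one must verify that the roots of unity lying inside the open disc accumulate at an \emph{interior} point of the half-plane $\{{\rm Re}(\alpha) > -1/p\}$; the remaining assertions reduce to standard spectral bookkeeping once the Boyd inversion formula has been transferred from $C([0,1])$ to $L^p([0,1])$ via the Minkowski estimate.
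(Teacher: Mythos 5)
Your proposal is correct, but it takes a genuinely different route from the paper. The paper's proof of Theorem \ref{T1-L} is deliberately short because it outsources the hard parts: the spectrum and point spectrum are quoted from Leibowitz \cite{Le}, \cite{Le-2}, and hypercyclicity and chaos from \cite[Theorems 2.3 and 2.6]{LPS}; the only arguments actually given there are the telescoping identity $\frac{\sC^n}{n}=\sC_{[n]}-\frac{n-1}{n}\sC_{[n-1]}$ combined with the Uniform Boundedness Principle and the spectral mapping theorem, yielding $q^n\leq r(\sC^n)\leq \|\sC^n\|_{op}\leq Mn$ to refute mean ergodicity, and $\|\sC^n\|_{op}=q^n$ to refute power boundedness. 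You instead reconstruct everything from the eigenvector family $g_\lambda(x)=x^{1/\lambda-1}$: the Minkowski-inequality bound on $P_\xi$ (valid for ${\rm Re}(\xi)<1/q$) together with Boyd's inversion formula handles the resolvent outside the closed disc, the Euler ODE argument pins the point spectrum to the open disc, Gelfand's formula kills power boundedness, and your refutation of mean ergodicity --- evaluating $\sC^n g_\lambda/n=\lambda^n g_\lambda/n$ at a real $\lambda\in(1,q)$ --- replaces the paper's UBP/spectral-mapping detour with a concrete eigenvector, which is arguably more elementary. Your Godefroy--Shapiro argument for hypercyclicity and chaos is essentially the eigenvector-density argument the paper itself deploys later for $L^p_{loc}(\R^+)$ (Lemma \ref{P.5} together with \cite[Proposition 2.33]{GE-P}), transplanted to $[0,1]$; the accumulation-point issue you flag in case (iii) is unproblematic, since $e^{2\pi i\theta}$ lies in the open disc exactly when $\cos(2\pi\theta)>1/q$, so rational $\theta\to 0$ gives root-of-unity eigenvalues accumulating at $1$, i.e.\ exponents $\alpha=1/\lambda-1$ accumulating at $0$, an interior point of $\{{\rm Re}(\alpha)>-1/p\}$. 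Two small points to tighten: Boyd's identity $(\lambda I-\sC)^{-1}=\xi I+\xi^2P_\xi$ must be verified on $L^p([0,1])$ and not merely restricted from $L^\infty$ (unlike the situation in Proposition \ref{PS1-C}, where $C([0,1])$ sits inside $L^\infty([0,1])$ as a closed invariant subspace); this is routine via Fubini or by checking the identity on the dense set of polynomials, but it is a step, not a citation. Also, only the sufficiency half of your ``precisely when'' claim for the boundedness of $P_\xi$ is needed, and only that half follows from the Minkowski estimate. The net effect is that your proof is self-contained where the paper's is bibliographical, at the cost of redoing what \cite{Le} and \cite{LPS} already established.
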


\begin{proof} The statements about the spectrum are due to G.M. Leibowitz, \cite{Le}; see also \cite[Theorem 1]{Le-2}.

Le\'on-Saavedra et al. have shown in \cite[Theorems 2.3 and 2.6]{LPS}, that $\sC$ is both hypercyclic and chaotic on $L^p([0,1])$. 

Fix $1<p<\infty$. Suppose that $\sC$ is mean ergodic on $L^p([0,1])$.  It follows from the identities $\frac{\sC^n}{n}=\sC_{[n]}-\frac{(n-1)}{n}\sC_{[n-1]}$, for $n\geq 2$,  that  $\frac{\sC^n}{n}\to 0$ in $\cL_s(L^p([0,1]))$ for $n\to\infty$. So, $\left\{\frac{\sC^n}{n}\right\}_{n\in\N}$ is uniformly bounded relative to $\|\cdot\|_{op}$
 (by the Principle of Uniform Boundedness). Hence, there exists $M>0$ such that $\left\|\frac{\sC^n}{n}\right\|_{op}\leq M$ for all $n\in\N$. On the other hand, 
by the spectral mapping theorem $\sigma(\sC^n; L^p([0,1]))=\{\lambda^n\colon \lambda\in \sigma(\sC;L^p([0,1]))\}$, for $n\in\N$, and so $q^n\in \sigma(\sC^n;L^p([0,1]))$, for $n\in\N$. Therefore,
\[
q^n\leq r(\sC^n)\leq \|\sC^n\|_{op}\leq Mn,\quad n\in\N;
\]
 a contradiction as $q>1$. Hence, $\sC$ is not mean ergodic.

From $q^n\in \sigma(\sC^n;L^p([0,1]))$ and $\|\sC\|_{op}=q$, for $n\in\N$, it follows that $\|\sC^n\|_{op}=q^n$ for each $n\in\N$. Hence, $\sC$ cannot be power bounded. 
\end{proof}

\begin{theorem}\label{T2-L} The Ces\`aro operator $\sC\colon L^p(\R^+)\to L^p(\R^+)$, $1<p<\infty$, is not power bounded and not mean ergodic. Moreover, 
\[
\sigma(\sC;L^p(\R^+))=\left\{\lambda\in \C\colon \left|\lambda-\frac{q}{2}\right|=\frac{q}{2}\right\}
\]
and
\[
\sigma_{pt}(\sC;L^p(\R^+)=\emptyset.
\]
\end{theorem}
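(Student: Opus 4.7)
The plan is to establish the two spectral identities first; the failures of power boundedness and of mean ergodicity then fall out from the spectral radius. For the point spectrum, suppose $f\in L^p(\R^+)\setminus\{0\}$ satisfies $\sC f=\lambda f$. One has $\lambda\not=0$, since $\int_0^x f=0$ a.e.\ forces $f=0$. Differentiating $\lambda xf(x)=\int_0^xf(t)\,dt$ shows that $f$ coincides a.e.\ with a smooth solution of the Euler equation $\lambda xy'(x)+(\lambda-1)y(x)=0$, hence has the form $f(x)=cx^{1/\lambda-1}$. But no nonzero multiple of any $x^\alpha$ belongs to $L^p(\R^+)$, since $\int_0^1|x^\alpha|^p\,dx<\infty$ requires $p\,\RR(\alpha)>-1$ whereas $\int_1^\infty|x^\alpha|^p\,dx<\infty$ requires $p\,\RR(\alpha)<-1$. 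Hence $\sigma_{pt}(\sC;L^p(\R^+))=\emptyset$.

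For the spectrum, note that $|\lambda-q/2|=q/2$ is equivalent to $\lambda=0$ or $\RR(1/\lambda)=1/q$. That $0\in\sigma(\sC)$ is clear, because the range of $\sC$ consists of absolutely continuous functions on $(0,\infty)$ and so is not dense. For $\lambda\not=0$ with $\RR(1/\lambda)<1/q$, set $\xi:=1/\lambda$ and adopt Boyd's operator $P_\xi f(x):=\int_0^1 s^{-\xi}f(xs)\,ds$. Minkowski's integral inequality together with the scaling identity $\|f(\cdot\,s)\|_p=s^{-1/p}\|f\|_p$ yields
$$\|P_\xi f\|_p\leq \left(\int_0^1 s^{-\RR(\xi)-1/p}\,ds\right)\|f\|_p=\frac{\|f\|_p}{1/q-\RR(\xi)},$$
so $P_\xi\in\cL(L^p(\R^+))$, and the formal identity $(\lambda I-\sC)(\lambda^{-1}I+\lambda^{-2}P_\xi)=I$ (verified as in \cite{B}) places $\lambda$ in $\rho(\sC)$. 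The symmetric case $\RR(1/\lambda)>1/q$ is handled analogously by Boyd's other operator $Q_\xi f(x):=\int_1^\infty s^{-\xi}f(xs)\,ds$, bounded on $L^p(\R^+)$ precisely when $\RR(\xi)>1/q$.

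It remains to show that every $\lambda_0\not=0$ with $\RR(1/\lambda_0)=1/q$ lies in $\sigma(\sC)$. For $\xi=\alpha+i\beta$ with $\alpha<1/q$, testing $P_\xi$ against the family $g_\ve(x):=x^{i\beta-1/p+\ve}\chi_{(0,1)}(x)$ (which has $\|g_\ve\|_p=(p\ve)^{-1/p}$) and letting $\ve\to 0^+$ produces the lower bound $\|P_\xi\|_{op}\geq 1/(1/q-\RR(\xi))$; consequently $\|P_{\xi_n}\|_{op}\to\infty$ whenever $\RR(\xi_n)\uparrow 1/q$. If such a $\lambda_0$ were in $\rho(\sC)$, then choosing $\lambda_n\to\lambda_0$ with $\RR(1/\lambda_n)<1/q$ and applying continuity of the resolvent map $\lambda\mapsto(\lambda I-\sC)^{-1}$ from $\rho(\sC)$ into $\cL_b(L^p(\R^+))$ would keep $\|(\lambda_n I-\sC)^{-1}\|_{op}$ bounded; but the explicit formula gives $\|(\lambda_n I-\sC)^{-1}\|_{op}\geq|\lambda_n|^{-2}\|P_{1/\lambda_n}\|_{op}-|\lambda_n|^{-1}\to\infty$, a contradiction --- exactly the Boyd-style argument already used for $C_l([0,\infty])$ in Proposition \ref{PS2-C}.

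Finally, since $q\in\sigma(\sC)$, the spectral mapping theorem gives $q^n\in\sigma(\sC^n)$, whence $\|\sC^n\|_{op}\geq q^n$; as $q>1$, $\sC$ is not power bounded. If $\sC$ were mean ergodic, then $\sC^n/n=\sC_{[n]}-\frac{n-1}{n}\sC_{[n-1]}\to 0$ in $\cL_s(L^p(\R^+))$, so the uniform boundedness principle would provide $M$ with $\|\sC^n\|_{op}\leq Mn$, contradicting $\|\sC^n\|_{op}\geq q^n$ (the same reasoning used in Theorem \ref{T1-L}). The main obstacle is the circle-inclusion in $\sigma(\sC)$: unlike in Proposition \ref{PS2-C}, where the test function could be taken in $L^\infty$, here one must build normalized test functions in $L^p(\R^+)$ whose action under $P_\xi$ is sharp enough to force the norm blow-up as $\RR(\xi)\to 1/q^-$; the family $g_\ve$ above, tuned to the imaginary part of $\xi$, is designed precisely for this purpose.
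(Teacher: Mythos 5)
Your proof is correct, and it takes a genuinely more self-contained route than the paper's. For the two spectral identities the paper gives no computation at all: it quotes the spectrum from Boyd \cite[Theorem 2]{B} and the empty point spectrum from Leibowitz \cite[p.28]{Le}, and then proves only the ergodic-theoretic consequences --- by exactly the spectral-radius/uniform-boundedness argument you use, repeated from Theorem \ref{T1-L}. You instead reconstruct the spectral facts: the Euler-equation bootstrap forcing any eigenfunction to be a multiple of $x^{1/\lambda-1}$, which is never in $L^p(\R^+)$ because $p$-integrability at $0$ and at $\infty$ impose incompatible conditions on $\RR(\alpha)$; Minkowski's integral inequality giving $\|P_\xi\|_{op}\leq (1/q-\RR(\xi))^{-1}$ for $\RR(\xi)<1/q$ (with the analogue for $Q_\xi$), which together with Boyd's resolvent identity confines $\sigma(\sC)$ to the circle; and the matching lower bound on $\|P_\xi\|_{op}$ via the family $g_\ve(x)=x^{i\beta-1/p+\ve}\chi_{(0,1)}(x)$, feeding the resolvent blow-up argument that the paper itself deploys only in the $C_l([0,\infty])$ setting (Proposition \ref{PS2-C}). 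Your adaptation is right where it needs to be: the imaginary part of the exponent cancels $s^{-i\beta}$ so the relevant integral is positive, and the shift by $\ve-1/p$ keeps $g_\ve\in L^p(\R^+)$ while $P_\xi g_\ve$ restricted to $(0,1)$ is an exact scalar multiple of $g_\ve$ with factor $(1/q-\RR(\xi)+\ve)^{-1}$, which yields the sharp constant as $\ve\to 0^+$. What your route buys is independence from \cite{B} and \cite{Le}; what the paper's buys is brevity. One small inaccuracy: to place $0$ in the spectrum you assert the range of $\sC$ ``is not dense'' --- in fact the range \emph{is} dense, since the dual operator $g\mapsto \int_x^\infty g(t)t^{-1}\,dt$ on $L^q(\R^+)$ is injective; but density is beside the point, as your observation that every function in the range has a representative continuous on $(0,\infty)$ already shows $\sC$ is not surjective, hence not invertible.
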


\begin{proof} The statement about the spectrum is due to D. Boyd, \cite[Theorem 2]{B}; for the point spectrum we refer to \cite[p.28]{Le}.

Fix $1<p<\infty$. Suppose that $\sC$ is mean ergodic on $L^p(\R^+)$. Since $q\in \sigma(\sC;L^p(\R^+))$, we can proceed as  in the proof of Theorem \ref{T1-L} to deduce that $q^n\leq r(\sC^n)\leq \|\sC^n\|_{op}\leq Mn$ for each $n\in\N$ and some constant $M>0$; a contradiction as $q>1$. Arguing as in the proof of Theorem \ref{T1-L} we again deduce that $\|\sC^n\|_{op}=q^n$, for $n\in\N$. So, $\sC$ cannot be power bounded.  
\end{proof}

\begin{remark}\label{r.(infty)}\rm According to Corollary 3.3 of \cite{G-LS} the Ces\`aro operator $\sC$
 is not supercyclic in $L^2(\R^+)$. Moreover, $\sC$ does not have any non-zero periodic points in any of the spaces $L^p(\R^+)$, $1<p<\infty$. For, if so,
 then some  root of  unity would be an eigenvalue of $\sC$,  contrary to the fact that $\sigma_{pt}(\sC;L^p([0,1]))=\emptyset$. 
\end{remark}


\section{The Ces\`aro operator on the Fr\'echet space $C(\R^+)$}

The lc--topology of  the Fr\'echet space  $C(\R^+)$ (see \S 1) is  generated by the increasing sequence of seminorms
\begin{equation}\label{e.sem-c}
q_j(f):=\max_{x\in [0,j]}|f(x)|, \quad f\in C(\R^+),\ j\in\N. 
\end{equation}
For each $j\in\N$, we denote by $C([0,j])$ the Banach space of all $\C$-valued, continuous functions on $[0,j]$ endowed with the norm
\begin{equation}\label{e.sem-bb}
\|f\|_j:=\max_{x\in [0,j]}|f(x)|, \quad f\in C([0,j]).
\end{equation}

For each $j\in\N$,  let  $Q_j\colon C(\R^+)\to C([0,j])$  and $Q_{j,j+1}\colon C([0,j+1])\to C([0,j])$   be  the respective restriction maps, i.e., $Q_jf:=f|_{[0,j]}$ for $f\in C(\R^+)$ and $Q_{j,j+1}f:=f|_{[0,j]}$ for $f\in C([0,j+1])$. Clearly, $Q_{j,j+1}\circ Q_{j+1}=Q_j$ with  $\|Q_jf\|_{j}=q_j(f)$ and $\|Q_{j,j+1}g\|_{j}=\|g\|_{j}\leq \|g\|_{j+1}$ for every  $f\in C(\R^+)$,  $g\in C([0,j+1])$ and $ j\in\N$. Moreover, we have the projective limit $C(\R^+)=\proj_{j\in\N}(C([0,j]), Q_{j,j+1})$. Observe that all of the operators $Q_{j,j+1}$  and $Q_j$, for  $j\in\N$, are \textit{surjective}. 

We investigate the Ces\`aro operator $\sC\colon C(\R^+)\to C(\R^+)$ defined, for every $f\in C(\R^+)$, by $\sC f(0)=f(0)$ and $\sC f(x)=\frac{1}{x}\int_0^xf(t)\,dt$, for $x>0$. To do this we denote by $\sC_j\colon C([0,j])\to C([0,j])$ the Banach space operator defined by the same formulae but, now for  $f\in C([0,j])$, $j\in\N$. 
It is routine to check that $\sC_jQ_j=Q_j\sC$ and $Q_{j,j+1}\sC_{j+1}=\sC_jQ_{j,j+1}$ for every $j\in\N$. The continuity of  $\sC$ on  $C(\R^+)$ is immediate from the inequalities $q_j(\sC f)\leq q_j(f)$, for $f\in C(\R^+)$ and $j\in\N$. 
For each $j\in\N$, define $T_j\colon C([0,1])\to C([0,j])$ by $T_jg(x):=g(x/j)$, for $x\in [0,j]$ and $g\in C([0,1])$. The linear operator $T_j$ is an isometry with inverse given by $T_j^{-1}h(x):=h(jx)$, for $x\in [0,1]$ and $h\in C([0,j])$. Moreover, $T_j\sC_1=\sC_jT_j$, for each $j\in\N$. To see this, fix $f\in C([0,1])$ and $x\in [0,j]$.  Since $(T_j\sC_1f)(0)=(\sC_jT_jf)(0)=f(0)$ we may assume that $x\in (0,j]$. Then 
\begin{eqnarray*}
& & (\sC_jT_jf)(x)=\frac{1}{x}\int_0^x(T_jf)(t)\,dt=\frac{1}{x}\int_0^x f(t/j)\, dt\\
& & =\frac{1}{x/j}\int_0^{x/j} f(s)\, ds=(\sC_1f)(x/j)=(T_j\sC_1)f(x).
\end{eqnarray*}
It follows from $T_j\sC_1=\sC_jT_j$ that $T_j\sC^n_1=\sC^n_jT_j$ and hence, that $T_j\sC^n_1T_j^{-1}=\sC^n_j$ for all $j,\ n\in\N$. Since both  $T_j$, $T_j^{-1}$ are isometries, we can conclude  
 that $\|\sC_j^n\|_{op}=\|\sC_1^n\|_{op}=1$, for each $j,\ n\in\N$, and that both
\begin{equation}\label{eq.par-1}
\sigma(\sC_j;C([0,j]))=\sigma(\sC_1; C([0,1]))
\end{equation}
and 
\begin{equation}\label{eq.par-2}
\sigma_{pt}(\sC_j;C([0,j]))=\sigma_{pt}(\sC_1; C([0,1])),
\end{equation}
for each  $j\in\N$.
 So, for each $j\in\N$,  the operator $\sC_j$ is power bounded. Moreover, the identities $(\sC_j)_{[n]}=T_j(\sC_1)_{[n]}T^{-1}_j$, for $j,\ n\in\N$, together with Theorem \ref{T1-C} and Proposition \ref{P.8} imply,  for each $j\in\N$, that $\sC_j$ is  mean ergodic but, not uniformly mean ergodic and not supercyclic (hence, not hypercyclic).

\begin{theorem}\label{T.7} The Ces\`aro operator $\sC\colon C(\R^+)\to C(\R^+)$ 
 is power bounded and mean ergodic but, not uniformly mean ergodic and  not supercyclic (hence, not hypercyclic). Moreover,
\[
\sigma(\sC; C(\R^+))=\left\{\lambda\in \C\colon \left|\lambda-\frac{1}{2}\right|\leq \frac{1}{2}\right\}
\]
and
\[
\sigma_{pt}(\sC; C(\R^+))=\sigma(\sC; C(\R^+))\setminus\{0\}.
\]
\end{theorem}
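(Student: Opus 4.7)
The plan is to lean on the projective-limit set-up $C(\R^+)=\proj_{j}(C([0,j]),Q_{j,j+1})$ and the intertwining $Q_j\sC=\sC_jQ_j$ already laid out just above the theorem, together with the fact (via the isometric conjugacy $\sC_j=T_j\sC_1T_j^{-1}$) that \emph{every} property already established for $\sC_1$ in Theorem \ref{T1-C} and Propositions \ref{P.8}, \ref{PS1-C} is inherited by each $\sC_j$. Power boundedness of $\sC$ is immediate from $q_j(\sC^nf)=\|\sC_j^nQ_jf\|_j\le\|\sC_j^n\|_{op}\,q_j(f)=q_j(f)$ for all $n,j\in\N$ and $f\in C(\R^+)$, which gives equicontinuity of $\{\sC^n\}_{n\in\N}$. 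Mean ergodicity follows because $Q_j\sC_{[n]}f=(\sC_j)_{[n]}Q_jf\to f(0)\textbf{1}$ uniformly on $[0,j]$ for every $j$ (by Theorem \ref{T1-C} applied to $\sC_j$), hence $\sC_{[n]}f\to f(0)\textbf{1}$ in $C(\R^+)$, so $\sC$ is mean ergodic with limit projection $Pf:=f(0)\textbf{1}$.

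The two negative properties are pulled back to $C([0,1])$ through the continuous surjection $Q_1$. For non-supercyclicity: if $g\in C(\R^+)$ had a projective orbit dense in $C(\R^+)$, then continuity and surjectivity of $Q_1$ (which preserve density of sets) would force $\{\lambda\sC_1^n(Q_1g):\lambda\in\C,n\in\N_0\}=Q_1\{\lambda\sC^ng\}$ to be dense in $C([0,1])$, contradicting Proposition \ref{P.8}. For failure of uniform mean ergodicity I plan to invoke the appropriate Appendix result for quojections, which asserts that uniform mean ergodicity on a quojection $X=\proj X_j$ forces uniform mean ergodicity on every factor $X_j$; since $\sC_1$ already fails this property by Theorem \ref{T1-C}, so must $\sC$. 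A direct alternative, should the Appendix statement not apply verbatim, is to extend any $g\in C([0,1])$ with $\|g\|_1\le 1$ to $\tilde g\in C(\R^+)$ by $\tilde g(x)=g(1)$ for $x\ge 1$, producing a $\tau$-bounded set $B\subset C(\R^+)$ with $Q_1(B)$ equal to the unit ball of $C([0,1])$; uniform mean ergodicity of $\sC$ relative to $B$ and $q_1$ would then yield $\|(\sC_1)_{[n]}-P_1\|_{op}\to 0$ on $C([0,1])$, again contradicting Theorem \ref{T1-C}.

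For the spectrum I plan to invoke the Appendix's formula for the spectrum of an operator on a quojection: since each $\sigma(\sC_j;C([0,j]))$ equals the closed disk $D:=\{\lambda\in\C:|\lambda-\tfrac12|\le\tfrac12\}$ by \eqref{eq.par-1} and Proposition \ref{PS1-C}, this formula collapses to $\sigma(\sC;C(\R^+))=D$. For the point spectrum, intertwining gives $(\lambda I-\sC_j)Q_jf=Q_j(\lambda I-\sC)f$, so any eigenvector of $\sC$ at $\lambda$ restricts to an eigenvector of $\sC_j$, yielding $\sigma_{pt}(\sC;C(\R^+))\subseteq\sigma_{pt}(\sC_j;C([0,j]))=D\setminus\{0\}$ via \eqref{eq.par-2}; the reverse inclusion follows by noting that $\sC$ is injective on $C(\R^+)$ (same differentiation argument as in Section 2, ruling out $0\in\sigma_{pt}$) and that for each nonzero $\lambda\in D$ the eigenfunction $g_\lambda(x)=x^{1/\lambda-1}$ supplied by Proposition \ref{PS1-C} lies in $C(\R^+)$ and satisfies $\sC g_\lambda=\lambda g_\lambda$. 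The main obstacle I expect is pinning down the precise Appendix statements (the spectral formula for quojection operators and the uniform-mean-ergodic transfer to factor Banach spaces) in a form that applies verbatim to our situation; once those are in hand the remaining work is bookkeeping through the intertwining $Q_j\sC=\sC_jQ_j$.
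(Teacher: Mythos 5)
Your proposal follows essentially the same route as the paper: the paper verifies the hypotheses of the Appendix Lemmas \ref{l.spectra} and \ref{l.unif-mean} for $X=C(\R^+)$, $X_j=C([0,j])$, deduces power boundedness and mean ergodicity from Lemma \ref{l.unif-mean}(i)\&(iii) (your direct seminorm computation and the explicit limit $Pf=f(0)\textbf{1}$ via the Galaz Fontes--Sol\'{\i}s iterates is the same mechanism, just unpacked), kills uniform mean ergodicity through Lemma \ref{l.unif-mean}(ii) exactly as you do, and pulls supercyclicity back through the surjection $Q_1$ exactly as you do. Your fallback construction of a bounded set $B$ with $Q_1(B)$ the unit ball (extending by the constant $g(1)$) is precisely the Dierolf--Zarnadze ingredient that makes the Appendix lemma work, instantiated by hand.

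The one soft spot is your claim that the Appendix spectral formula ``collapses'' to $\sigma(\sC;C(\R^+))=D$. It does not: \eqref{e.resol-l} is a sandwich, $\sigma(\sC)\su\cup_j\sigma(\sC_j)\su\sigma(\sC)\cup\cup_j\sigma_{pt}(\sC_j)$, and since $\cup_j\sigma_{pt}(\sC_j)=D\setminus\{0\}$, the sandwich alone yields only $\{0\}\su\sigma(\sC)\su D$ --- it gives the upper bound plus $0\in\sigma(\sC)$, but nothing forces $D\setminus\{0\}\su\sigma(\sC)$. That lower bound must come from the eigenfunctions $g_\lambda(x)=x^{\frac{1}{\lambda}-1}$, which you do supply, though you present them only as establishing the point spectrum; the paper makes this the centerpiece of the spectral argument (it proves $\sigma_{pt}(\sC_1)\su\sigma_{pt}(\sC)\su\sigma(\sC)$ via these eigenfunctions, notes $0\in\sigma(\sC)$ directly because the range of $\sC$ lies in $C^1(\R^+)$ so $\sC$ is not surjective, and only then feeds everything back into \eqref{e.resol-l} to squeeze out equality). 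So your ingredients are complete and the proof is correct once reassembled in this order; just replace ``the formula collapses'' with the explicit chain: upper bound from the first containment of \eqref{e.resol-l}, $D\setminus\{0\}\su\sigma_{pt}(\sC)$ from the eigenfunctions on $\R^+$, and $0\in\sigma(\sC)$ from non-surjectivity (or from the second containment of \eqref{e.resol-l}). The point-spectrum identity is then exactly the paper's argument via \eqref{e.resol-ln} and \eqref{eq.par-2}.
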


\begin{proof} All the assumptions of Lemmas \ref{l.spectra} and \ref{l.unif-mean} (in the Appendix) are satisfied with $X:=C(\R^+)$, $X_j:=C([0,j])$, $S:=\sC\in \cL(X)$ and $S_j:=\sC_j\in \cL(X_j)$, for $j\in\N$.

By Theorem \ref{T1-C} the operator $\sC_1\colon C([0,1])\to C([0,1])$ is power bounded and mean ergodic. The  comments prior to Theorem \ref{T.7} ensure  that $\sC_j\colon C([0,j])\to C([0,j])$ is also power bounded and mean ergodic, for each $j\in\N$. So, Lemma \ref{l.unif-mean} (i)\&(iii) in the Appendix yield that $\sC\colon C(\R^+)\to C(\R^+)$ is both power bounded and mean ergodic. 

If $\sC$ were uniformly mean ergodic on $C(\R^+)$, then also $\sC_1\colon C([0,1])\to C([0,1])$ would be uniformly mean ergodic by Lemma \ref{l.unif-mean}(ii) in the Appendix. This contradicts Theorem \ref{T1-C}. So, $\sC$ is not uniformly mean ergodic. 

Observe that $\sC_1Q_1=Q_1\sC$ with $Q_1$  surjective. If $\sC\colon C(\R^+)\to C(\R^+)$ is supercyclic, then $\{\lambda \sC^nf\colon n\in\N_0,\ \lambda\in \C\}$ is dense in  $C(\R^+)$ for some $f\in C(\R^+)$. By the properties mentioned in the previous two sentences it follows, with $g:=Q_1f\in C([0,1])$, that  $\{\lambda \sC_1^ng\colon n\in\N_0,\ \lambda\in \C\}$ is dense in  $C([0,1])$, i.e., 
$\sC_1$ is  supercyclic in $C([0,1])$. This contradicts Proposition \ref{P.8}. So, $\sC$ is not supercyclic in $C(\R^+)$.

Concerning the spectra,  by \eqref{e.resol-l} of Lemma \ref{l.spectra} (in the Appendix) and Proposition \ref{PS1-C} we have via \eqref{eq.par-1} that 
\begin{equation}\label{eq.par-3}
\sigma(\sC; C(\R^+))\su  \cup_{j=1}^\infty \sigma(\sC_j; C([0,j]))=\sigma(\sC_1; C([0,1])).
\end{equation}
On the other hand, for every $\lambda\in \sigma_{pt}(\sC_1;C([0,1]))=\left\{\lambda\in \C\colon \left|\lambda-\frac{1}{2}\right|\leq\frac{1}{2}\right\}\setminus\{0\}$ (see Proposition \ref{PS1-C}), the function $f_\lambda(x)=x^{\frac{1}{\lambda}-1}$, for $x\in [0,1]$, when defined by the same formula for all $x\in \R^+$,  belongs to $C(\R^+)$ and satisfies $\sC f_\lambda=\lambda f_\lambda$. So, $\lambda\in \sigma_{pt}(\sC;C(\R^+))$ and  we have
\begin{equation}\label{eq.par-4}
\sigma_{pt}(\sC_1;C([0,1]))\su \sigma_{pt}(\sC;C(\R^+))\su \sigma(\sC;C(\R^+)).
\end{equation}
Since the range $\sC(C(\R^+))\su C^1(\R^+)$ with  $C^1(\R^+)$  a proper subspace of $C(\R^+)$, we see that $\sC$ is not surjective and so  $0\in \sigma(\sC;C(\R^+))$. So, we also have via Proposition \ref{PS1-C} that
\[
\sigma_{pt}(\sC_1;C([0,1]))\cup\{0\}=\sigma(\sC_1;C([0,1]))\su \sigma(\sC;C(\R^+)).
\]
By  \eqref{e.resol-l} of Lemma \ref{l.spectra} (in the Appendix) and \eqref{eq.par-2}, \eqref{eq.par-4}, it follows that
\begin{eqnarray*}
& & \sigma(\sC_1;C([0,1]))\su \sigma(\sC;C(\R^+))\cup\cup_{j=1}^\infty\sigma_{pt}(\sC_j; C([0,j]))\\
& & =\sigma(\sC;C(\R^+))\cup\sigma_{pt}(\sC_1; C([0,1]))\su \sigma(\sC;C(\R^+)).
\end{eqnarray*}
Combined with \eqref{eq.par-3} this yields that 
\[
\sigma(\sC;C(\R^+))=\sigma(\sC_1;C([0,1]))=\left\{\lambda\in \C\colon \left|\lambda-\frac{1}{2}\right|\leq\frac{1}{2}\right\}.
\]

Finally, by  \eqref{e.resol-ln} of Lemma \ref{l.spectra} (in the Appendix), we have
\[
\sigma_{pt}(\sC;C(\R^+))\su \cup_{j=1}^\infty\sigma_{pt}(\sC_j; C([0,j]))=\sigma_{pt}(\sC_1;C([0,1]))\su \sigma_{pt}(\sC;C(\R^+)).
\]
Thus, from  Proposition \ref{PS1-C} it follows that 
\[
\sigma_{pt}(\sC;C(\R^+))=\sigma_{pt}(\sC_1;C([0,1]))=\left\{\lambda\in \C\colon \left|\lambda-\frac{1}{2}\right|\leq\frac{1}{2}\right\}\setminus\{0\}.
\]
\end{proof}

\section{The Ces\`aro operator on the Fr\'echet space $L^p_{loc}(\R^+)$, $1<p<\infty$}

Recall that $L^p_{loc}(\R^+)$,   $1<p<\infty$, is  the Fr\'echet space of all  $\C$-valued,  measurable functions $f$  on $\R^+$ such that 
\begin{equation}\label{e.sem}
q_j(f):=\left(\int_0^j|f(x)|^p\, dx\right)^{1/p}<\infty, \quad j\in\N,
\end{equation}
 endowed with   the lc-topology generated by the increasing sequence of seminorms $\{q_j\}_{j\in\N}$ . 

Fix $1<p<\infty$. For each $j\in\N$,  denote by $L^p([0,j])$  the  Banach space of all $\C$-valued,  measurable functions   on $[0,j]$ with the norm $\|f\|_j:=\left(\int_0^j|f(x)|^p\, dx\right)^{1/p}$, for $f\in L^p([0,j])$.

For each $j\in\N$, denote by   $Q_j\colon L^p_{loc}(\R^+)\to L^p([0,j])$ and  $Q_{j,j+1}\colon L^p([0,j+1])\to L^p([0,j])$  the respective restriction maps on $[0,j]$, i.e., $Q_jf:=f|_{[0,j]}$ for $f\in L^p_{loc}(\R^+)$ and $Q_{j,j+1}f:=f|_{[0,j]}$ for $f\in L^p([0,j+1])$. Clearly, for each $j\in\N$, we have  $Q_{j,j+1}\circ Q_{j+1}=Q_j$ with $\|Q_jf\|_{j}=q_j(f)$,  for $f\in L^p_{loc}(\R^+)$, and  $\|Q_{j,j+1}g\|_{j}=\|g\|_{j}\leq \|g\|_{j+1}$,  for $g\in L^p([0,j+1])$. Observe that the maps $Q_j$ and $Q_{j,j+1}$ are \textit{surjective} for all $j\in\N$. Moreover,  $L_{loc}^p(\R^+)=\proj_{j\in\N}(L^p([0,j]), Q_{j,j+1})$. 

We consider the Ces\`aro operator $\sC\colon L_{loc}^p(\R^+)\to L_{loc}^p(\R^+)$  given by $\sC f(x):=\frac{1}{x}\int_0^x f(t)\,dt$, for $x>0$ and all  $f\in L^p_{loc}(\R^+)$, which is well defined as $L^p([0,x])\su L^1([0,x])$ for each $x>0$. For each $j\in\N$, denote by $\sC_j$ the operator defined in the same way on the Banach space $L^p([0,j])$. By Hardy's inequality, \cite[p.240]{HLP}, the linear  operators $\sC$ and $\sC_j$, $j\in\N$, are continuous. Moreover, it is routine to check that $\sC_jQ_j=Q_j\sC$ and $Q_{j,j+1}\sC_{j+1}=\sC_jQ_j$, for each $j\in\N$. More detailed information about the Fr\'echet space $L^p_{loc}(\R^+)$ can be found in \cite{A-1}, \cite{A-3}, \cite{A-2}, for example. 

Fix $j\in\N$ and define $T_j\colon L^p([0,1])\to L^p([0,j])$ by $(T_jf)(x):=f(x/j)$, for $x\in [0,j]$ and  $f\in L^p([0,1])$. Then the linear operator $T_j$ is a bijection with norm  $\|T_j\|_{op}=j^{1/p}$. Indeed, for every $f\in L^p([0,1])$, we have
\[
\|Tf\|_{j}^p=\int_0^j|(Tf)(x)|^p\, dx=\int_0^j |f(x/j)|^p\, dx=j\int_0^1 |f(y)|^p\, dy=j\|f\|^p_{1}.
\]
The inverse of $T_j$ is the operator $T_j^{-1}\colon  L^p([0,j])\to  L^p([0,1])$ given by  $(T_j^{-1}f)(x):=f(jx)$, for $x\in [0,1]$ and $f\in L^p([0,j])$, with   $\|T_j^{-1}\|_{op}=j^{-1/p}$.

The same calculations as  in \S 4 show  that 
$\sC_jT_j=T_j\sC_1$, for $j\in\N$. It follows that $T_j\sC_1^n=\sC_j^nT_j$ and hence, also that $T_j\sC_1^nT_j^{-1}=\sC_j^n$ for all $j,\ n\in\N$. Accordingly, $\|\sC_j^n\|_{op}\leq \|T_j\|_{op}\|\sC_1^n\|_{op}\|T_j^{-1}\|_{op}=\|\sC_1^n\|_{op}$. In a similar way it follows from $\sC_1^n=T_j^{-1}\sC_j^nT_j$ that $\|\sC_1^n\|_{op}\leq\|\sC_j^n\|_{op}$ and hence,
 for every $j,\ n\in\N$,  that $\|\sC_j^n\|_{op}=\|\sC_1^n\|_{op}=q^n$, where $\frac{1}{p}+\frac{1}{q}=1$.  It also follows that 
\begin{equation}\label{eq.par5-1}
\sigma(\sC_j; L^p([0,j]))=\sigma(\sC_1; L^p([0,1]))
\end{equation}
 and  
\begin{equation}\label{eq.par5-2}
\sigma_{pt}(\sC_j; L^p([0,j]))=\sigma_{pt}(\sC_1; L^p([0,1])),
\end{equation}
 for each $j\in\N$. Via the identities $(\sC_j)_{[n]}=T_j(\sC_1)_{[n]}T^{-1}_j$ and $\|\sC_j^n\|_{op}=\|\sC_1^n\|_{op}$, valid for all $j,\ n\in\N$, it follows from 
 Theorem \ref{T1-L} that  the operator $\sC_j\colon  L^p([0,j])\to L^p([0,j])$ is not power bounded and  not mean ergodic but,  it is hypercyclic and chaotic, for each $j\in\N$. 

\begin{theorem}\label{T1-Lp}Let $1<p<\infty$.  The Ces\`aro operator $\sC\colon L^p_{loc}(\R^+)\to  L^p_{loc}(\R^+)$  is not power bounded and not mean ergodic but, it is hypercyclic, chaotic and satisfies
\[
\sigma(\sC; L^p_{loc}(\R^+))=\left\{\lambda\in\C\colon \left|\lambda-\frac{q}{2}\right|\leq \frac{q}{2}\right\}
\]
and
\[
\sigma_{pt}(\sC; L^p_{loc}(\R^+))=\left\{\lambda\in\C\colon \left|\lambda-\frac{q}{2}\right|< \frac{q}{2}\right\}.
\]
\end{theorem}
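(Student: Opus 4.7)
The plan is to establish the four assertions (spectrum, point spectrum, failure of power boundedness and mean ergodicity, and hypercyclicity/chaoticity) in turn, exploiting the projective limit $L^p_{loc}(\R^+) = \proj_{j\in\N}(L^p([0,j]), Q_{j,j+1})$ set up above. The first three follow the template of Theorem \ref{T.7}, transferring across the surjective restrictions $Q_j$ and invoking the Appendix lemmas; the fourth is a direct application of the Godefroy--Shapiro / Desch--Schappacher--Webb eigenvalue criterion for Fr\'echet spaces.

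For the spectrum, the inclusion $\sigma(\sC; L^p_{loc}(\R^+)) \subseteq \sigma(\sC_1; L^p([0,1])) = \{\lambda\in\C: |\lambda-q/2|\leq q/2\}$ is immediate from \eqref{e.resol-l} of Lemma \ref{l.spectra} together with \eqref{eq.par5-1}. Applying the analogous point-spectrum formula, \eqref{eq.par5-2} and Theorem \ref{T1-L} yield $\sigma_{pt}(\sC; L^p_{loc}(\R^+)) \subseteq \{|\lambda-q/2|<q/2\}$; the reverse inclusion is direct, because for each such $\lambda$ the function $g_\lambda(x) := x^{1/\lambda - 1}$ lies in $L^p_{loc}(\R^+)$ (the $p$-integrability condition $p\cdot\RR(1/\lambda - 1) > -1$ is equivalent to $\RR(1/\lambda) > 1/q$, i.e., $|\lambda - q/2| < q/2$) and satisfies $\sC g_\lambda = \lambda g_\lambda$ by direct calculation. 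With the point spectrum pinned down, closing the loop is identical to Theorem \ref{T.7}: the containment $\sigma(\sC_1; L^p([0,1])) \subseteq \sigma(\sC; L^p_{loc}(\R^+)) \cup \cup_{j=1}^\infty \sigma_{pt}(\sC_j; L^p([0,j]))$ from \eqref{e.resol-l}, combined with $\cup_{j=1}^\infty \sigma_{pt}(\sC_j; L^p([0,j])) = \sigma_{pt}(\sC_1; L^p([0,1])) \subseteq \sigma(\sC; L^p_{loc}(\R^+))$, forces $\sigma(\sC_1; L^p([0,1])) \subseteq \sigma(\sC; L^p_{loc}(\R^+))$, delivering the full closed disk.

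To rule out power boundedness and mean ergodicity I would transfer downward to $\sC_1$. If $\{\sC^n\}$ were equicontinuous on $L^p_{loc}(\R^+)$, or if $\sC$ were mean ergodic (so $\sC^n/n \to 0$ in $\cL_s(L^p_{loc}(\R^+))$ and hence $\{\sC^n/n\}$ is equicontinuous by Banach--Steinhaus), then composing with the extension-by-zero operator $E_1\colon L^p([0,1]) \to L^p_{loc}(\R^+)$ would give a contradiction. This $E_1$ satisfies $Q_1 E_1 = I$ and $q_j(E_1 g) = \|g\|_1$ for every $j\in\N$ and $g\in L^p([0,1])$, and the intertwining $Q_1 \sC^n E_1 = \sC_1^n$ then forces $\sup_n \|\sC_1^n\|_{op} < \infty$ or $\sup_n \|\sC_1^n\|_{op}/n < \infty$, respectively---both contradicting $\|\sC_1^n\|_{op} = q^n$ with $q>1$ established before the theorem. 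Alternatively, Lemma \ref{l.unif-mean} of the Appendix applies verbatim.

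The real work is hypercyclicity and chaos. With $H := \{\alpha \in \C : \RR(\alpha) > -1/p\}$ and $g_\alpha(x) := x^\alpha$, one has $g_\alpha \in L^p_{loc}(\R^+)$ and $\sC g_\alpha = (\alpha+1)^{-1} g_\alpha$ for all $\alpha \in H$. By the Godefroy--Shapiro criterion it suffices to show that each of the linear spans
\[
\mathrm{span}\{g_\alpha : \alpha \in H,\ |\alpha+1|>1\},\quad \mathrm{span}\{g_\alpha : \alpha \in H,\ |\alpha+1|<1\},
\]
and $\mathrm{span}\{g_\alpha : \alpha \in H,\ (\alpha+1)^{-1}\ \text{is a root of unity}\}$ is dense in $L^p_{loc}(\R^+)$; these correspond to eigenvalues $\lambda = (\alpha+1)^{-1}$ with $|\lambda|<1$, $|\lambda|>1$, and $\lambda$ a root of unity, respectively. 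The main obstacle is this density, which I would handle as follows: a dominated-convergence estimate in the spirit of the proof of Proposition \ref{PS2-C} shows that $\alpha \mapsto g_\alpha$ is holomorphic from $H$ into $L^p_{loc}(\R^+)$, and since every continuous linear functional on $L^p_{loc}(\R^+)$ factors as $\phi_j \circ Q_j$ for some $j$ and some $\phi_j \in (L^p([0,j]))'$, the identity theorem reduces the density of $\mathrm{span}\{g_\alpha : \alpha \in A\}$ for any $A\subseteq H$ to $A$ having an accumulation point in $H$ (using that $\{x^k : k \in \N_0\}$ spans a dense subspace of $L^p_{loc}(\R^+)$ by Weierstrass on each $[0,j]$). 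All three index sets satisfy this: the first two contain non-empty open subsets of $H$ (take real $\alpha > 0$, respectively real $\alpha \in (-1/p, 0)$), while the third accumulates at $\alpha = 0 \in H$ because roots of unity $e^{i\theta}$ with $|\theta|$ small satisfy $\cos\theta > 1/q$ and accumulate at $\lambda = 1$, whose preimage $\alpha = 0$ lies in $H$. The criterion then delivers both hypercyclicity and chaos in one stroke.
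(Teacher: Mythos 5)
Your proposal is correct, and for the spectral statements it is essentially the paper's own argument: the same appeal to \eqref{e.resol-l} and \eqref{e.resol-ln} of Lemma \ref{l.spectra} together with \eqref{eq.par5-1}, \eqref{eq.par5-2}, the eigenfunctions $x^{1/\lambda-1}$ (your integrability computation $p\,\RR(1/\lambda-1)>-1\Leftrightarrow\RR(1/\lambda)>1/q$ matches the paper's), and the same closing of the loop as in Theorem \ref{T.7}. The genuine divergence is in the dynamics. The paper obtains hypercyclicity by \emph{transfer}: each $\sC_j$ is hypercyclic on $L^p([0,j])$ (Theorem \ref{T1-L} plus the similarity $\sC_j=T_j\sC_1T_j^{-1}$), and since $\sC_jQ_j=Q_j\sC$ with $Q_j$ surjective, \cite[Proposition 2.1]{BFPW} applies; chaoticity is then completed separately via \cite[Proposition 2.33]{GE-P} and Lemma \ref{P.5}, which gives density of ${\rm span}\{x^{\alpha_n}\}$ when $\{\alpha_n\}$ accumulates in $H_p^+$. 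You instead run everything through the Godefroy--Shapiro criterion (valid for separable Fr\'echet spaces, as in \cite{GE-P}), verifying density of the three eigenvector spans; your holomorphy-plus-dual-factorization reduction is precisely the proof of Lemma \ref{P.5}, applied three times instead of once, and your checks that the three parameter sets meet $H_p^+$ with accumulation points (real $\alpha>0$, real $\alpha\in(-1/p,0)$, and $\alpha_n=1/\lambda_n-1\to 0$ for roots of unity $\lambda_n\to 1$ with $\cos\theta>1/q$) are all sound. This buys a unified treatment -- hypercyclicity and chaos in one stroke, and in fact the stronger conclusion that $\sC$ is mixing -- at the cost of not reusing the known hypercyclicity of $\sC_1$ from \cite{LPS}. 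For the failure of power boundedness and mean ergodicity the paper transfers downward via Lemma \ref{l.unif-mean}\,(i)\&(iii) (its citation of Lemma \ref{l.spectra} at that point is a typo), whereas your extension-by-zero operator $E_1$ gives a direct, self-contained contradiction with $\|\sC_1^n\|_{op}=q^n$: the intertwining $Q_1\sC^nE_1=\sC_1^n$ does follow from $Q_1\sC=\sC_1Q_1$ and $Q_1E_1=I$, $q_j(E_1g)=\|g\|_1$ for all $j$, and Banach--Steinhaus in the Fr\'echet (hence barrelled) space $L^p_{loc}(\R^+)$ upgrades $\sC^n/n\to 0$ in $\cL_s$ to equicontinuity, so this variant works as well. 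Both routes are valid; yours is marginally more self-contained on the dynamical side, the paper's marginally more economical in that it quotes known Banach-space results and lets the Appendix machinery do the rest.
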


\begin{proof} All the assumptions of Lemmas \ref{l.spectra} and \ref{l.unif-mean} (in the Appendix) are satisfied with  $X:=L^p_{loc}(\R^+)$, $X_j:=L^p([0,j])$,  $S:=\sC\in \cL(X)$ and $S_j:=\sC_j\in \cL(X_j)$, for $j\in\N$.

By Theorem \ref{T1-L} the operator $\sC_1\colon L^p([0,1])\to L^p([0,1])$ is neither power bounded nor mean ergodic. So, by applying Lemma \ref{l.spectra} (i)\&(iii) (in the Appendix) we can conclude that also $\sC\colon L^p_{loc}(\R^+)\to  L^p_{loc}(\R^+)$ is not power bounded and not mean ergodic.

Recalling that  $\sC_j$ is hypercyclic on $L^p([0,j])$ for all $j\in\N$ (cf. Theorem \ref{T1-L} and the  comments prior to Theorem \ref{T1-Lp}) and that $\sC_jQ_j=Q_j\sC$, for each $j\in\N$, we can apply \cite[Proposition 2.1]{BFPW} to conclude that $\sC$ is hypercyclic on $L^p_{loc}(\R^+)$. 

By \eqref{e.resol-l} of Lemma \ref{l.spectra} and Theorem \ref{T1-L} we have  via \eqref{eq.par5-1} that 
\begin{equation}\label{eq.par5-3}
 \sigma(\sC; L^p_{loc}(\R^+))\su \cup_{j=1}^\infty \sigma(\sC_j; L^p([0,j]))=\sigma(\sC_1; L^p([0,1])).
\end{equation}
Now, if $\lambda\in\sigma_{pt}(\sC_1;L^p([0,1]))=\left\{\lambda\in\C\colon  \left|\lambda-\frac{q}{2}\right|<\frac{q}{2}\right\}$ (see Theorem \ref{T1-L}), then ${\rm Re}\left(\frac{1}{\lambda}\right)>\frac{1}{q}$ and so the function $f_\lambda(x):=x^{\frac{1}{\lambda}-1}$ belongs to $L^p_{loc}(\R^+)$ and is an eigenvector of $\sC$ corresponding to the eigenvalue $\lambda$. To see this, observe for every $j\in\N$ that
\[
(q_j(f_\lambda))^p=\int_0^j|x^{\frac{1}{\lambda}-1}|^p\, dx=\int_0^jx^{p\left({\rm Re}(\frac{1}{\lambda})-1\right)}\, dx <\infty,
\]
as $p\left({\rm Re}\left(\frac{1}{\lambda}\right)-1\right)>p(\frac{1}{q}-1)=-1$. Thus, $f_\lambda\in L^p_{loc}(\R^+)$. It is routine to check that $\sC f_\lambda=\lambda f_\lambda$. Hence,
\[
\sigma_{pt}(\sC_1;L^p([0,1]))\su \sigma_{pt}(\sC; L^p_{loc}(\R^+))\su \sigma(\sC; L^p_{loc}(\R^+)).
\]
So, by \eqref{e.resol-l} of Lemma \ref{l.spectra} it follows that 
\begin{eqnarray*}
& & \sigma(\sC_1;L^p([0,1]))\su \sigma(\sC; L^p_{loc}(\R^+))\cup \cup_{j=1}^\infty \sigma_{pt}(\sC_j;L^p([0,j]))\\
& & =\sigma(\sC;L^p_{loc}(\R^+))\cup\sigma_{pt}(\sC_1;L^p([0,1]))\su \sigma(\sC; L^p_{loc}(\R^+)).
\end{eqnarray*}
Combined with \eqref{eq.par5-3} this shows that 
\[
\sigma(\sC;L^p_{loc}(\R^+))= \sigma(\sC_1;L^p([0,1]))=\left\{\lambda\in\C\colon \left|\lambda-\frac{q}{2}\right|\leq\frac{q}{2}\right\}.
\]
Now,  by \eqref{e.resol-ln} of Lemma \ref{l.spectra}  we obtain
\[
\sigma_{pt}(\sC;L^p_{loc}(\R^+))\su \cup_{j=1}^\infty \sigma_{pt}(\sC_j;L^p([0,j]))=\sigma_{pt}(\sC_1;L^p([0,1]))\su \sigma_{pt}(\sC;L^p_{loc}(\R^+)),
\]
and hence, that 
\[
\sigma_{pt}(\sC;L^p_{loc}(\R^+))=\sigma_{pt}(\sC_1;L^p([0,1]))=\left\{\lambda\in\C\colon \left|\lambda-\frac{q}{2}\right|<\frac{q}{2}\right\}.
\]
\end{proof}

We already know that $\sC$ is hypercyclic. It remains to show that $\sC$ is chaotic in $L^p_{loc}(\R^+)$.  For this we need the following result.

\begin{lemma}\label{P.5} Let $1<p<\infty$ and the sequence $\{\alpha_n\}_{n=1}^\infty\su\C$ satisfy ${\rm Re}(\alpha_n)>-\frac{1}{p}$ for each $n\in\N$. Suppose that $\{\alpha_n\}_{n=1}^\infty$ has an accumulation point in the open set $H_p^+=\left\{z\in \C\colon {\rm Re}(z) >-\frac{1}{p}\right\}$. Then 
\[
Y:={\rm span}\{x^{\alpha_n}\colon n\in\N\}
\]
is a dense subspace of  $L^p_{loc}(\R^+)$.
\end{lemma}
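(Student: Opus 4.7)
The plan is to apply the Hahn--Banach theorem: since $L^p_{loc}(\R^+)$ is locally convex, it suffices to show that every $\vp\in (L^p_{loc}(\R^+))'$ which vanishes identically on the set $\{x^{\alpha_n}\colon n\in\N\}$ must be the zero functional.

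First, I would identify the dual. Because $L^p_{loc}(\R^+)=\proj_{j\in\N}(L^p([0,j]),Q_{j,j+1})$ with all linking maps $Q_{j,j+1}$ and $Q_j$ surjective, every continuous seminorm on $L^p_{loc}(\R^+)$ is dominated by some $q_M$, so each $\vp\in(L^p_{loc}(\R^+))'$ factors as $\vp=\psi\circ Q_M$ for some $M\in\N$ and some $\psi\in (L^p([0,M]))'=L^q([0,M])$. Concretely, there exist $M\in\N$ and $g\in L^q([0,M])$ such that
\[
\vp(f)=\int_0^M f(x)\, g(x)\, dx,\qquad f\in L^p_{loc}(\R^+).
\]
The hypothesis $\RR(\alpha_n)>-\frac{1}{p}$ ensures $x^{\alpha_n}\in L^p_{loc}(\R^+)$, and the assumption $\vp(x^{\alpha_n})=0$ reads $\int_0^M x^{\alpha_n}g(x)\,dx=0$ for every $n\in\N$.

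Next, I introduce the auxiliary function
\[
F(z):=\int_0^M x^{z}\, g(x)\, dx,\qquad z\in H_p^+=\left\{z\in\C\colon \RR(z)>-\tfrac{1}{p}\right\}.
\]
For $\RR(z)>-\frac{1}{p}$ H\"older's inequality gives $|F(z)|\leq \|x^{\RR(z)}\|_{L^p([0,M])}\|g\|_{L^q([0,M])}<\infty$. By the standard holomorphy--under--the--integral argument (dominate $|x^{z}\log x|$ locally on compact subsets of $H_p^+$ via $\sup_{x\in(0,M]}x^{\delta}|\log x|<\infty$ for any $\delta>0$ and Morera's theorem, or direct differentiation) the function $F$ is holomorphic on the open set $H_p^+$. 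Since $F(\alpha_n)=0$ for every $n\in\N$ and the sequence $\{\alpha_n\}$ has an accumulation point inside $H_p^+$, the identity theorem forces $F\equiv 0$ on $H_p^+$.

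In particular $F(k)=\int_0^M x^k g(x)\,dx=0$ for every $k\in\N_0$, i.e.\ the functional $\psi$ annihilates every polynomial on $[0,M]$. Since polynomials are dense in $L^p([0,M])$ by the Weierstrass theorem, $\psi=0$ in $(L^p([0,M]))'$, hence $g=0$ a.e.\ and $\vp=0$. By Hahn--Banach, $Y$ is dense in $L^p_{loc}(\R^+)$.

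The main technical point to be careful with is the holomorphy of $F$ on $H_p^+$ together with a clean verification that the identity theorem applies; the rest is bookkeeping about the dual of the projective limit and the classical moment--problem consequence of Weierstrass' theorem.
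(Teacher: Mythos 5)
Your proposal is correct and follows essentially the same route as the paper: annihilate $Y$ by a functional, represent it as integration against some $g\in L^q$ supported on a finite interval, observe that $z\mapsto\int_0^M x^z g(x)\,dx$ is holomorphic on $H_p^+$ and vanishes on a set with an accumulation point, hence vanishes identically, and conclude via density of polynomials and Hahn--Banach. The only (immaterial) differences are that you verify holomorphy explicitly and finish by deducing $g=0$ in $L^q([0,M])$, whereas the paper cites the dual description of $L^p_{loc}$ and uses density of polynomials in $L^p_{loc}(\R^+)$ directly.
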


\begin{proof} For each $\alpha\in H_p^+$, set $f_\alpha:=x^{\alpha}$, in which case ${\rm Re}(\alpha)>-\frac{1}{p}$ ensures that $f_\alpha\in L^p_{loc}(\R^+)$. Suppose that 
$F\in (L^p_{loc}(\R^+))'$ satisfies $F(g)=0$ for each  $g\in Y$. Then 
$F(f_{\alpha_n})=0$, for each  $n\in\N$. By the structure of the dual space of $L^p_{loc}(\R^+)$,  \cite{A-3}, there exist $j\in\N$ and $h\in L^q(\R^+)$, where  $\frac{1}{p}+\frac{1}{q}=1$, such that ${\rm supp}(h)\su [0,j]$ and 
\[
F(f)=\int_0^j f(x)h(x)\, dx,\quad f\in L^p_{loc}(\R^+).
\]
Define 
$\Phi\colon H_p^+\to \C$ by  $\Phi(\alpha):=F(f_\alpha)=\int_0^j x^\alpha h(x)\, dx$, for  $\alpha\in H_p^+$. The function $\Phi$ is analytic on $H_p^+$ and vanishes on the sequence $\{\alpha_n\}_{n=1}^\infty$, which  has an accumulation point in $H_p^+$. So, $\Phi$ is identically  zero on $H_p^+$, i.e., $F(f_\alpha)=0$ for all $\alpha\in H_p^+$. In particular, $F$ vanishes on all $\C$-valued polynomials on $\R^+$.   Since such polynomials form a dense subspace of $L^p_{loc}(\R^+)$, it follows that $F=0$ on $L^p_{loc}(\R^+)$. As $F$ is arbitrary, we can conclude via the  Hahn--Banach theorem that $Y$ is dense in $L^p_{loc}(\R^+)$.
%
%
\end{proof}

Returning to showing that $\sC$ is chaotic in $L^p_{loc}(\R^+)$, 
  it suffices  to verify that the space
\[
H:={\rm span}\{\Ker (\lambda I-\sC)\colon  \lambda=e^{2\pi i\theta} {\rm \ for \ some \ } \theta\in \Q\}
\]
is dense in $L^p_{loc}(\R^+)$; see \cite[Proposition 2.33]{GE-P}.
We already know that 
\[
\sigma_{pt}(\sC;L^p_{loc}(\R^+))=\left\{\lambda\in\C\colon \left|\lambda-\frac{q}{2}\right|<\frac{q}{2}\right\}.
\]
Since $q>1$, we can select $\{\theta_n\}_{n=1}^\infty\su\Q$ such that  $\lambda_n:=e^{2\pi i\theta_n}\in  \sigma_{pt}(\sC;L^p_{loc}(\R^+))$ for each $n\in\N$ with $\lambda_n\to 1$ as $n\to\infty$. Define $\beta_n:=\frac{1}{\lambda_n}-1$, for $n\in\N$. Since $\lim_{n\to \infty}\beta_n=0$, also $\lim_{n\to\infty}{\rm Re}(\beta_n)=0$ and so there exists $N\in\N$ such that ${\rm Re}(\beta_n)>-\frac{1}{p}$ for all $n>N$. Hence, the sequence $\alpha_n:=\beta_{n+N}$, for $n\in\N$, satisfies ${\rm Re}(\alpha_n)>-\frac{1}{p}$ for all $n\in\N$ and $\{\alpha_n\}_{n=1}^\infty$ has $0\in H_p^+$ as an accumulation point. Then, 
by Lemma \ref{P.5} applied to $\{\alpha_n\}_{n=1}^\infty$,    the space
$
Y:={\rm span}\{x^{\alpha_n}\colon n\in\N\}$ 
is dense in $L^p_{loc}(\R^+)$. Since $\sC x^{\alpha_n}=\lambda_nx^{\alpha_n}$ with $x^{\alpha_n}\in L^p_{loc}(\R^+)$, for all $n\in\N$, it follows that  $Y\su H$. Hence, $\sC$ has a dense set of periodic points, i.e., it is chaotic (being also hypercyclic).\qed



\section{Appendix}
Here we collect a few relevant results concerning the spectrum  and mean ergodic properties of continuous linear operators defined on 
 certain classes of Fr\'echet spaces.

\begin{lemma}\label{l.spectra} Let $X$ be a  Fr\'echet space and $S\in\cL(X)$. Suppose that   $X=\proj_{j\in\N}(X_j, Q_{j,j+1})$, with $X_j$ a Banach space (having norm $\|\ \|_j$) and linking maps $Q_{j,j+1}\in \cL(X_{j+1},X_j)$ which are surjective for all $j\in\N$, and suppose, for each $j\in\N$, that  there exists $S_j\in \cL(X_j)$ satisfying  
\begin{equation}\label{e.resol-1}
S_jQ_j=Q_jS,
\end{equation}
where $Q_j\in \cL(X,X_j)$, $j\in\N$, denotes the canonical projection of $X$ onto $X_j$ (i.e., $Q_{j,j+1}\circ Q_{j+1}=Q_j$).
Then 
\begin{equation}\label{e.resol-l}
\sigma(S)\su \cup_{j=1}^\infty \sigma(S_j)\su \sigma(S)\cup \cup_{j=1}^\infty \sigma_{pt}(S_j).
\end{equation}
Moreover, 
\begin{equation}\label{e.resol-ln}
\sigma_{pt}(S)\su \cup_{j=1}^\infty \sigma_{pt}(S_j).
\end{equation}
\end{lemma}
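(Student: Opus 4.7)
The plan is to treat the three inclusions separately, leveraging two ingredients: the given intertwining $S_j Q_j = Q_j S$ and the surjectivity of the canonical projections $Q_j$ together with that of the linking maps $Q_{j,j+1}$. For \eqref{e.resol-ln}, I take $0 \ne x \in X$ with $Sx = \lambda x$; since $X = \proj_{j\in\N} X_j$, some $Q_j x$ is non-zero, and applying $Q_j$ to the eigenvector equation gives $S_j(Q_j x) = \lambda (Q_j x)$, whence $\lambda \in \sigma_{pt}(S_j)$.

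The right-hand inclusion in \eqref{e.resol-l} I argue by contraposition. Fix $j\in\N$ and suppose $\lambda \in \rho(S)$ while $(\lambda I - S_j)$ is injective; the goal is $\lambda \in \rho(S_j)$. Given $y \in X_j$, surjectivity of $Q_j$ yields $u \in X$ with $Q_j u = y$, and then $z := R(\lambda, S) u \in X$ satisfies $(\lambda I - S) z = u$; applying $Q_j$ produces $(\lambda I - S_j) Q_j z = y$. Hence $(\lambda I - S_j)$ is bijective on the Banach space $X_j$, and the open mapping theorem delivers a continuous inverse.

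The left-hand inclusion in \eqref{e.resol-l} is the main step. Again by contraposition, I assume $\lambda \in \rho(S_j)$ for every $j\in\N$, and plan to glue the local resolvents $R(\lambda, S_j)$ into a global inverse on $X$. For this I first derive the auxiliary intertwining $Q_{j,j+1} S_{j+1} = S_j Q_{j,j+1}$: both sides agree after precomposition with $Q_{j+1}$ (using $Q_{j,j+1} Q_{j+1} = Q_j$ and \eqref{e.resol-1} at levels $j$ and $j+1$), and since $Q_{j+1}$ is surjective the two operators coincide on $X_{j+1}$. Composing on either side with the appropriate resolvent yields the compatibility
\[
Q_{j,j+1}(\lambda I - S_{j+1})^{-1} = (\lambda I - S_j)^{-1} Q_{j,j+1}, \qquad j \in \N.
\]
Given $y \in X$, the vectors $x_j := (\lambda I - S_j)^{-1} Q_j y \in X_j$ are then compatible with the linking maps, so the projective limit structure of $X$ produces a unique $x \in X$ with $Q_j x = x_j$ for all $j$. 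Invoking \eqref{e.resol-1} gives $Q_j (\lambda I - S)x = (\lambda I - S_j) x_j = Q_j y$ for every $j$, so $(\lambda I - S)x = y$. Injectivity of $(\lambda I - S)$ is immediate from \eqref{e.resol-1} (if $(\lambda I - S)x = 0$ then $Q_j x = 0$ for all $j$), and continuity of the inverse follows from the open mapping theorem for Fr\'echet spaces.

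The main obstacle is precisely this final assembly: the resolvents $R(\lambda, S_j)$ do not a priori commute with the linking maps, so I must first upgrade the hypothesis \eqref{e.resol-1} to a commutation relation between the consecutive Banach space operators $S_j$ and $S_{j+1}$ via $Q_{j,j+1}$. Once this compatibility is in place, the projective limit description of $X$ coherently reconstructs $(\lambda I - S)^{-1}$ from the local resolvents.
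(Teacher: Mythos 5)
Your proof is correct and follows essentially the same route as the paper's: the threadwise injectivity argument gives \eqref{e.resol-ln} and the injectivity half of the first inclusion, and your contrapositive for the second inclusion (pushing surjectivity of $\lambda I_j-S_j$ down from $\lambda\in\rho(S)$ via the surjective $Q_j$, then invoking the open mapping theorem on the Banach space $X_j$) is exactly the paper's argument that a non-invertible but surjective $\nu I_{j_0}-S_{j_0}$ forces $\nu\in\sigma_{pt}(S_{j_0})$. The only difference is that for the surjectivity of $\lambda I-S$ when $\lambda\in\cap_{j=1}^\infty\rho(S_j)$ the paper simply cites the last part of the proof of Theorem 4.1 of \cite{ABR-4}, whereas you make this step self-contained by deriving $Q_{j,j+1}S_{j+1}=S_jQ_{j,j+1}$ from \eqref{e.resol-1} and the surjectivity of $Q_{j+1}$, deducing $Q_{j,j+1}(\lambda I_{j+1}-S_{j+1})^{-1}=(\lambda I_j-S_j)^{-1}Q_{j,j+1}$, and gluing the compatible thread $x_j:=(\lambda I_j-S_j)^{-1}Q_jy$ into the required preimage in the projective limit -- which is precisely the construction the citation encapsulates.
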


\begin{proof} It follows from  \eqref{e.resol-1} that
\begin{equation}\label{e.resol-2}
(\lambda I_j-S_j)Q_j=Q_j(\lambda I-S)
\end{equation}
for all $j\in\N$ and $\lambda\in \C$, where $I_j$ denotes the identity map on $X_j$.

Fix any  $\lambda\in \cap_{j=1}^\infty\rho(S_j)$. If $(\lambda I-S)x=0$ for some $x\in X$, then by \eqref{e.resol-2} we have $(\lambda I_j-S_j)Q_jx=Q_j(\lambda I-S)x=0$  for all $j\in\N$. It follows that $Q_jx=0$ for all $j\in\N$. This implies that $x=0$ as $x\in X=  \proj_{j\in\N}(X_j, Q_{j,j+1})$. The proof of  the surjectivity of $(\lambda I-S)$ follows as in the last part of the proof (cf. p.154) of Theorem 4.1 of \cite{ABR-4} via \eqref{e.resol-2} and the fact that $(\lambda I_j-S_j)$ is bijective for all $j\in\N$. As $X$ is a Fr\'echet space, we can conclude that $(\lambda I-S)\in \cL(X)$ and so $\lambda\in \rho(S)$. This establishes that $\sigma(S)\su \cup_{j=1}^\infty \sigma(S_j)$.

To verify the second containment in  \eqref{e.resol-l} we first observe that if  $\mu\in \rho(S)$, then $(\mu I- S)$  is invertible in $\cL(X)$ and hence, $(\mu I_j- S_j)\in \cL(X_j)$ is \textit{surjective} for all $j\in\N$; this follows routinely from \eqref{e.resol-2}  and the fact that each operator $Q_j$, for $j\in\N$, is surjective. Suppose that $\nu\in \rho(S)\setminus \cap_{j=1}^\infty \rho(S_j)$. Then $\nu\not\in\rho(S_{j_0})$ for some $j_0\in \N$, i.e., $(\nu I_{j_0}-S_{j_0})$ is \textit{not} invertible in $\cL(X_{j_0})$. Since $(\nu I_{j_0}-S_{j_0})$ is   surjective, it follows that $\nu\in \sigma_{pt}(S_{j_0})$.

Now, let $\lambda\in \cup_{j=1}^\infty \sigma(S_j)$. If $\lambda\in \sigma(S)$, then there is nothing to prove. If $\lambda\not\in \sigma(S)$, then $\lambda\in \rho(S)$. From the previous paragraph $\lambda\in \sigma_{pt}(S_{j_0})$ for some $j_0\in \N$, i.e., $\lambda\in \cup_{j=1}^\infty \sigma(S_j)$. This establishes the second containment in  \eqref{e.resol-l}. Thereby  \eqref{e.resol-l} has been proved.

To verify \eqref{e.resol-ln} let $\lambda\in (\cup_{j=1}^\infty \sigma_{pt}(S_j))^c$, in which case $(\lambda_jI_j-S_j)$
is injective for each $j\in\N$. Suppose that $x\in X$ satisfies $(\lambda I- S)x=0$ in which case \eqref{e.resol-2} implies that 
$(\lambda I_j-S_j)Q_jx=0$ for every $j\in\N$. Hence, $Q_jx=0$ for every $j\in\N$ and so $x=0$. This shows that   $(\lambda I-S)$ is injective and so $\lambda\not\in \sigma_{pt}(S)$, i.e.,
 $\lambda\in (\sigma_{pt}(S))^c$. Thereby  \eqref{e.resol-ln} is established.
\end{proof}

A Fr\'echet space $X$ is always a projective limit of  continuous linear operators $R_{j}:\ X_{j+1}\to X_j$, for $j\in\N$, with each $X_j$ a Banach space.  If $X_j$ and $R_j$ can be chosen such that each $R_j$ is surjective and $X$ is isomorphic to the projective limit $\proj_{j\in\N}(X_j,R_j)$, then $X$ is called a \textit{quojection}, \cite[Section 5]{BD}. 
 Banach spaces and countable products of Banach spaces are quojections.
In \cite{M} Moscatelli gave the first examples of quojections which are not isomorphic to countable products of Banach spaces. Concrete examples of quojection Fr\'echet spaces are  $\omega=\C^\N$, the  spaces $L^p_{loc}(\Omega)$, for $1\leq p\leq \infty$, and $C^{(m)}(\Omega)$ for  $m\in\N_0$, with $\Omega\su \R^N$ any open set, all of which are isomorphic  to  countable products of Banach spaces. We refer the reader to the survey paper \cite{MM} for further information. Under the assumptions of Lemma \ref{l.spectra} the Fr\'echet space $X$ there is necessarily a quojection. The same is true in Lemma \ref{l.convergenza} and Lemma \ref{l.unif-mean} to follow.

\begin{lemma}\label{l.convergenza} Let $X$ be a  Fr\'echet space and $\{S_n\}_{n=1}^\infty\su\cL(X)$. Suppose that $X=\proj_{j\in\N}(X_j, Q_{j,j+1})$, with $X_j$ a Banach space (having norm $\|\ \|_j$) and linking maps $Q_{j,j+1}\in \cL(X_{j+1},X_j)$ which are surjective for all $j\in\N$, and suppose, for each $j,\ n\in\N$, that  there exists $S_n^{(j)}\in \cL(X_j)$ satisfying   
\begin{equation}\label{e.resol-3}
S_n^{(j)}Q_j=Q_jS_n,
\end{equation}
where $Q_j\in \cL(X,X_j)$, $j\in\N$, denotes the canonical projection of $X$ onto $X_j$ (i.e., $Q_{j,j+1}\circ Q_{j+1}=Q_j$).
Then the following statements are equivalent.
\begin{itemize}
\item[\rm (i)] The limit  $\tau_b$-$\lim_{n\to\infty}S_n=:S$ exists in $\cL_b(X)$.
\item[\rm (ii)] For each $j\in\N$,  the limit $\tau_b$-$\lim_{n\to\infty}S_n^{(j)}=:S^{(j)}$  exists in $\cL_b(X_j)$.
\end{itemize}
In this case, the operators $S\in\cL(X)$ and $S^{(j)}\in \cL(X_j)$, for $j\in\N$, satisfy
\begin{equation}\label{e.resol-op}
Sx=(S^{(j)}x_j)_{j}, \quad x=(x_j)_{j}\in X.
\end{equation}
\end{lemma}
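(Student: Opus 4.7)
The plan is to leverage the intertwining $S_n^{(j)} Q_j = Q_j S_n$ together with the surjectivity of both the canonical projections $Q_j$ and the linking maps $Q_{j,j+1}$. Since the topology of $X$ is generated by the seminorms $p_j(x) := \|Q_j x\|_j$ and each $\cL_b(X_j)$ coincides with $\cL(X_j)$ in the operator-norm topology (as $X_j$ is a Banach space), both directions will amount to comparing $\sup_{x \in B} p_j((S_n - \cdot)x)$ with the operator norms of $S_n^{(j)} - \cdot$.

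For (ii) $\Rightarrow$ (i), I would first verify the compatibility $Q_{j,j+1} S^{(j+1)} = S^{(j)} Q_{j,j+1}$ for every $j$: writing $Q_{j,j+1} S_n^{(j+1)} Q_{j+1} = Q_j S_n = S_n^{(j)} Q_j = S_n^{(j)} Q_{j,j+1} Q_{j+1}$ and cancelling the surjective $Q_{j+1}$ gives the identity at level $n$, and (ii) lets one pass to the operator-norm limit. Hence $(S^{(j)} Q_j x)_j$ is a consistent thread in $\proj_j(X_j, Q_{j,j+1}) = X$, so \eqref{e.resol-op} defines a linear map $S \colon X \to X$, which is continuous since $p_j(Sx) \leq \|S^{(j)}\|_{op}\, p_j(x)$. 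For convergence in $\cL_b(X)$, the estimate
\[
\sup_{x \in B} p_j((S_n - S)x) = \sup_{x \in B} \|(S_n^{(j)} - S^{(j)}) Q_j x\|_j \leq \|S_n^{(j)} - S^{(j)}\|_{op} \sup_{x \in B} p_j(x)
\]
tends to $0$ for every bounded $B \subset X$ and every $j$, by (ii).

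For (i) $\Rightarrow$ (ii), fix $j$. The crux is to lift the closed unit ball $U_j$ of $X_j$ to a \emph{bounded} set $B \subset X$ with $Q_j(B) \supseteq U_j$. Since each $Q_{k,k+1} \colon X_{k+1} \to X_k$ is a continuous surjection between Banach spaces, the open mapping theorem supplies constants $C_k \geq 1$ such that every element of $X_k$ admits a preimage in $X_{k+1}$ of norm at most $C_k$ times its own. Starting from $y \in U_j$, iterating upwards through the levels $k > j$ and propagating downwards via the continuous maps $Q_{k,j}$ for $k < j$, one constructs a thread $x(y) \in X$ with $Q_j x(y) = y$ and $p_k(x(y)) \leq M_k$ for constants $M_k$ independent of $y$; then $B := \{x(y) : y \in U_j\}$ is bounded in $X$. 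The estimate
\[
\|S_n^{(j)} - S_m^{(j)}\|_{op} = \sup_{y \in U_j} p_j((S_n - S_m) x(y)) \leq \sup_{x \in B} p_j((S_n - S_m) x)
\]
then tends to $0$ by (i), so $\{S_n^{(j)}\}$ is Cauchy in the Banach space $\cL(X_j)$ and converges to some $S^{(j)} \in \cL(X_j)$. Relation \eqref{e.resol-op} in this case follows from the interchange of limits $S^{(j)} Q_j x = \lim_n S_n^{(j)} Q_j x = \lim_n Q_j S_n x = Q_j S x$.

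I expect the main obstacle to be the Mittag-Leffler style lifting of bounded sets in (i) $\Rightarrow$ (ii): because $X$ is non-normable, no $0$-neighbourhood of $X$ is bounded, so openness of $Q_j$ alone cannot directly yield a bounded $B \subset X$ with $Q_j(B) \supseteq U_j$. It is precisely the quojection structure---the surjectivity of every linking map $Q_{k,k+1}$ between Banach spaces---that makes the iterative construction work, via repeated application of the open mapping theorem at each level.
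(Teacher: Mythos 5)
Your proof is correct, and its overall architecture coincides with the paper's: both directions rest on the intertwining relations \eqref{e.resol-3}, the compatibility $Q_{j,j+1}S^{(j+1)}=S^{(j)}Q_{j,j+1}$ obtained by cancelling the surjective $Q_{j+1}$, and, for (i)$\Rightarrow$(ii), on lifting the closed unit ball $\mathcal{U}_j$ of $X_j$ to a \emph{bounded} set $B\su X$ with $\mathcal{U}_j\su Q_j(B)$. The genuine difference lies in how that lifting is obtained. The paper notes that $X$ is necessarily a quojection and imports the lifting property from Dierolf--Zarnadze \cite[Proposition 1]{DiZa}; you instead prove it directly, applying the open mapping theorem to each surjective linking map $Q_{k,k+1}$ between Banach spaces to choose preimages with norm control, and assembling for each $y\in\mathcal{U}_j$ a thread $x(y)$ with $Q_jx(y)=y$ and $\|Q_kx(y)\|_k\le M_k$, where $M_k=C_{k-1}\cdots C_j$ for $k>j$ and $M_k=\|Q_{k,j}\|_{op}$ for $k<j$ are independent of $y$. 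Since boundedness in the Fr\'echet space $X$ only requires each generating seminorm to be bounded on $B$, this is a valid, self-contained proof of exactly the fact the paper cites (your closing remark correctly identifies this as the crux, and the construction works precisely because $X$ is the full projective limit of a spectrum with surjective linking maps). A second, smaller difference: in (i)$\Rightarrow$(ii) you obtain $S^{(j)}$ as the operator-norm limit of the sequence $\{S_n^{(j)}\}_{n=1}^\infty$, which your estimate over $B$ shows to be Cauchy in the Banach space $\cL(X_j)$, whereas the paper first defines $S^{(j)}(Q_jx):=Q_jSx$, checks well-definedness, and invokes the uniform boundedness principle for continuity before upgrading to $\tau_b$-convergence via the lifted set; your route is marginally cleaner, since well-definedness and continuity of $S^{(j)}$ come for free from completeness of $\cL(X_j)$, and relation \eqref{e.resol-op} then follows, as you say, from $S^{(j)}Q_jx=\lim_nQ_jS_nx=Q_jSx$. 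In short: the paper's citation buys generality (the lifting holds in any quojection, independently of the representing spectrum), while your argument buys self-containedness for the given surjective spectrum, which is all the lemma assumes.
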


\begin{proof} For each $j\in\N$,  define $q_j(x):=\|Q_jx\|_j$ for $x\in X$. Then $\{q_j\}_{j=1}^\infty\su\G_X$ is a fundamental sequence  generating the lc--topology of $X$ (as $X=\proj_{j\in\N}(X_j, Q_{j,j+1})$). 

(i)$\Rightarrow$(ii). The existence in $\cL_b(X)$ of the stated limit $S\in \cL(X)$ ensures  the existence (in the norm of $X_j$) of  
\begin{equation}\label{e.limite}
\lim_{n\to\infty}S_n^{(j)}Q_jx=\lim_{n\to\infty}Q_jS_nx=Q_jSx,
\end{equation}
for all $j\in\N$ and $x\in X$, via the continuity of  $Q_j$ and \eqref{e.resol-3}. In fact, the weaker requirement that $S_n\to S$ in $\cL_s(X)$ suffices for this.

Fix $j\in\N$. Define $S^{(j)}$ on $X_j=Q_j(X)$ by $S^{(j)}(Q_jx):=Q_jSx$, for  $x\in X$. Then $S^{(j)}\in \cL(X_j)$. Indeed, $S^{(j)}$ is well defined because if $Q_jx=Q_jx'$ for some $x, \ x'\in X$, then $Q_j(x-x')=0$ and so, via \eqref{e.resol-3},  $0=S_n^{(j)}Q_j(x-x')=Q_jS_n(x-x')$ for all $n\in\N$. Passing to the limit for $n\to\infty$, it follows that $0=Q_jS(x-x')$, i.e., $Q_jSx=Q_jSx'$. Clearly, $S^{(j)}$ is linear as both  $Q_j$ and $S$ are linear. Finally, since  $S^{(j)}u=\lim_{n\to\infty}S_n^{(j)}u$, for each $u\in X_j$  (c.f. \eqref{e.limite}) and $\{S_n^{(j)}\}_{n=1}^\infty\su \cL(X_j)$ with $X_j$ a Banach space, it follows from the Uniform Boundedness  Principle that $S^{(j)}$ is continuous and hence, $S_n^{(j)}\to S^{(j)}$ in $\cL_s(X_j)$ for $n\to\infty$. It is routine (via \eqref{e.resol-3}) to check that $S^{(j)}Q_j=Q_jS$.

As noted above,  $X$ is necessarily  a quojection and so there exists $B\in \cB(X)$ 
such that $\mathcal{U}_j\su Q_j(B)$, \cite[Proposition 1]{DiZa}, where $\mathcal{U}_j$ is the closed unit ball of $X_j$. So, by \eqref{e.resol-3} we have
\begin{eqnarray*}
&  &\sup_{u\in \mathcal{U}_j}\|(S_n^{(j)}-S^{(j)})u\|_j\leq \sup_{x\in B}\|(S_n^{(j)}-S^{(j)})Q_j{x}\|_j\\
& &=\sup_{x\in B}\|(Q_j(S_n-S)x\|_j=\sup_{x\in B}{q}_j((S_n-S)x)
\end{eqnarray*}
for all $n\in\N$. Since  $\sup_{x\in B}{q}_j((S_n-S)x)\to 0$ for $n\to\infty$ (by assumption), it follows that $\sup_{u\in \mathcal{U}_j}\|(S_n^{(j)}-S^{(j)})u\|_j\to 0$ for $n\to\infty$, i.e., $\tau_b$-$\lim_{n\to\infty}S_n^{(j)}=S^{(j)}$. 
Since $j\in\N$ is arbitrary, the proof is complete.

(ii)$\Rightarrow$(i). Fix $x=(x_j)_{j}\in X=\proj_{j\in\N}(X_j,Q_{j,j+1})$ and set $Sx:=(S^{(j)}x_j)_{j}$. Then $Sx\in X$. Indeed, $Q_jx=x_j$ for all $j\in\N$ and so, via \eqref{e.resol-3}, we have
\begin{eqnarray*}
&  &Q_{j,j+1}S_{j+1}x_{j+1}=\lim_{n\to\infty}Q_{j,j+1}S_n^{(j+1)}Q_{j+1}x
=\lim_{n\to\infty}Q_{j,j+1}Q_{j+1}S_nx\\
& &\qquad =\lim_{n\to\infty}Q_{j}S_nx=\lim_{n\to\infty}S_n^{(j)}Q_jx=S^{(j)}x_j,
\end{eqnarray*}
for all $j\in\N$, i.e., $Sx\in X$. Clearly, the linearity of the $S^{(j)}$'s imply the linearity of the map $S\colon x\mapsto Sx$, for $x\in X$. Moreover, the continuity of $S$ is a consequence of $X=\proj_{j\in\N}(X_j,Q_{j,j+1})$. Next, fix $j\in\N$ and $B\in \cB(X)$.
Again  via \eqref{e.resol-3} we have
\begin{eqnarray*}
& & \sup_{x\in B}q_j((S_n-S)x)=\sup_{x\in B}\|(Q_j(S_n-S)x\|_j=\sup_{x\in B}\|(S_n^{(j)}-S^{(j)})Q_jx\|_j\\
& &\qquad = \sup_{u\in Q_j(B)}\|(S_n^{(j)}-S^{(j)})u\|_j
\end{eqnarray*}
for all $n\in\N$. Since $Q_j(B)\in\cB(X_j)$, it follows from the assumption (ii) that  $\sup_{u\in Q_j(B)}\|(S_n^{(j)}-S^{(j)})u\|_j \to 0$ for $n\to\infty$. Accordingly, for each   $j\in\N$ and each $B\in \cB(X)$ 
 we have $\lim_{n\to\infty}\sup_{x\in B}q_j((S_n-S)x)= 0$, i.e., (i) holds.
\end{proof}

\begin{remark}\label{r.convergenza}\rm
A careful examination of the proof of Lemma \ref{l.convergenza} shows that the equivalence (i)$\Leftrightarrow$(ii) remains valid if $\tau_b$ is replaced with $\tau_s$.
\end{remark}

\begin{lemma}\label{l.unif-mean} Let $X=\proj_{j\in\N}(X_j, Q_{j.j+1})$ be a Fr\'echet space and operators $S\in \cL(X)$ and $S_j\in \cL(X_j)$, for $j\in\N$, be given which satisfy the assumptions of Lemma \ref{l.spectra} (with $Q_j\in \cL(X,X_j)$, $j\in\N$, denoting the canonical projection of $X$ onto $X_j$ and $\|\ \|_j$ being the norm in the Banach space $X_j$). 
\begin{itemize}
\item[\rm (i)] $S\in \cL(X)$ is power bounded if and only if each $S_j\in \cL(X_j)$, $j\in\N$, is power bounded. 
\item[\rm (ii)] $S\in \cL(X)$ is  uniformly mean ergodic if and only if each $S_j\in \cL(X_j)$, $j\in\N$, is uniformly mean ergodic.
\item[\rm (iii)] $S\in \cL(X)$ is mean ergodic if and only if each $S_j\in \cL(X_j)$, $j\in\N$, is mean ergodic.
\end{itemize}
\end{lemma}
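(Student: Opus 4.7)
The plan is to leverage Lemma \ref{l.convergenza} (together with Remark \ref{r.convergenza}) for parts (ii) and (iii), since the Ces\`aro means $S_{[n]}$ and $(S_j)_{[n]}$ inherit the intertwining relation from $S$ and $S_j$. Part (i) will be handled directly, with the quojection property providing the key ingredient for the nontrivial direction.

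First I would observe that the Ces\`aro averages intertwine: iterating the relation $S_jQ_j=Q_jS$ gives $S_j^mQ_j=Q_jS^m$ for every $m\in\N$, hence
\begin{equation*}
(S_j)_{[n]}Q_j=\frac{1}{n}\sum_{m=1}^nS_j^mQ_j=Q_j\cdot\frac{1}{n}\sum_{m=1}^nS^m=Q_jS_{[n]},
\end{equation*}
for all $j,n\in\N$. Thus the sequences $\{S_{[n]}\}_n\subseteq\cL(X)$ and $\{(S_j)_{[n]}\}_n\subseteq\cL(X_j)$ satisfy the hypothesis of Lemma \ref{l.convergenza}. Applying that lemma gives (ii) at once: $S$ is uniformly mean ergodic precisely when $\{S_{[n]}\}_n$ converges in $\cL_b(X)$, equivalently when $\{(S_j)_{[n]}\}_n$ converges in $\cL_b(X_j)$ for each $j\in\N$, i.e., when each $S_j$ is uniformly mean ergodic. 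Part (iii) is identical, using Remark \ref{r.convergenza} to replace $\tau_b$ by $\tau_s$.

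For (i), the direction ``each $S_j$ power bounded $\Rightarrow$ $S$ power bounded'' is direct: setting $q_j(x):=\|Q_jx\|_j$ for the fundamental sequence of seminorms generating the topology of $X$, one has for every $j,n\in\N$ and $x\in X$,
\begin{equation*}
q_j(S^nx)=\|Q_jS^nx\|_j=\|S_j^nQ_jx\|_j\leq \|S_j^n\|_{op}\, q_j(x),
\end{equation*}
and the supremum over $n$ on the right is finite by hypothesis, yielding equicontinuity of $\{S^n\}_n$. For the converse, assume $\{S^n\}_n$ is equicontinuous in $\cL(X)$. Since the assumptions of Lemma \ref{l.spectra} make $X$ a quojection, there exists $B\in\cB(X)$ such that $\mathcal{U}_j\subseteq Q_j(B)$, where $\mathcal{U}_j$ denotes the closed unit ball of $X_j$ (this is the Dierolf--Zarnadze fact already exploited in the proof of Lemma \ref{l.convergenza}). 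Then
\begin{equation*}
\|S_j^n\|_{op}=\sup_{u\in\mathcal{U}_j}\|S_j^nu\|_j\leq\sup_{x\in B}\|S_j^nQ_jx\|_j=\sup_{x\in B}q_j(S^nx),
\end{equation*}
and equicontinuity of $\{S^n\}_n$ together with boundedness of $B$ makes this uniformly bounded in $n$, so $S_j$ is power bounded.

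The only technical obstacle is the quojection property $\mathcal{U}_j\subseteq Q_j(B)$ invoked in the converse of (i); once this is in hand everything reduces to applications of Lemma \ref{l.convergenza} and Remark \ref{r.convergenza}, so no new conceptual ingredient beyond what already appears in the proof of Lemma \ref{l.convergenza} is required.
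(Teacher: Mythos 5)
Your proposal is correct and follows essentially the same route as the paper's own proof: the intertwining relation $(S_j)_{[n]}Q_j=Q_jS_{[n]}$ is fed into Lemma \ref{l.convergenza} (with Remark \ref{r.convergenza} for the $\tau_s$ version) to settle (ii) and (iii), while (i) is argued directly. In particular, your converse direction of (i) via the Dierolf--Zarnadze quojection fact $\mathcal{U}_j\subseteq Q_j(B)$ is precisely the paper's argument, the only cosmetic difference being that the paper packages the uniform bound as boundedness of $C:=\cup_{n\in\N}S^n(B)$ rather than invoking equicontinuity of $\{S^n\}_{n=1}^\infty$ on $B$ directly.
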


\begin{proof} Let $\{q_j\}_{j=1}^\infty\su \G_X$ be the fundamental sequence of seminorms generating the lc-topology of $X$ as given in the proof of Lemma \ref{l.convergenza}.

(i) Suppose that each $S_j\in \cL(X_j)$, $j\in\N$, is power bounded, i.e., there exists $M_j>0$ such that
\[
\| S_j^nu\|_j\leq M_j\|u\|_j,\quad u\in X_j,\ n\in\N.
\] 
It follows from \eqref{e.resol-1} that $S^n_jQ_j=Q_jS^n$ for all $j,\ n\in\N$. Fix $j\in\N$. Then, for each $n\in\N$ and $x\in X$ we have
\[
q_j(S^nx)=\| Q_jS^n x\|_j=\| S_j^nQ_j x\|_j\leq M_j\|Q_jx\|_j=M_jq_j(x).
\]
Since $\{q_j\}_{j=1}^\infty$ generate the lc-topology of the Fr\'echet space $X$, it follows that $\{S^n\}_{n=1}^\infty\su \cL(X)$ is equicontinuous, i.e., $S$ is power bounded.

Conversely, suppose that  $S$ is power bounded. Fix $j\in\N$ and let $\mathcal{U}_j$ be the closed unit ball of $X_j$. Since $X$ is a quojection, there exists $B\in \cB(X)$ 
 with $\mathcal{U}_j\su Q_j(B)$. Moreover, the power boundedness of $S$ implies that $C:=\cup_{n\in\N}S^n(B)\in \cB(X)$ 
and hence, there exists $M>0$ such that $q_j(z)\leq M$ for every $z\in C$. Let $u\in \mathcal{U}_j$. Then $u=Q_jx$ for some $x\in B$ and so
\[
\|S_j^nu\|_j=\|S_j^nQ_jx\|_j=\|Q_jS^nx\|_j=q_j(S^nx)\leq M,
\]
for every $n\in\N$. This implies that the operator norms satisfy
$\|S_j^n\|_{op}\leq M$, for $n\in\N$.
Accordingly, $S_j\in \cL(X_j)$ is power bounded.

(ii) For each $n\in\N$ define $\tilde{S}_n:=S_{[n]}\in \cL(X)$ and $\tilde{S}_n^{(j)}:=(S_j)_{[n]}\in \cL(X_j)$, for $j\in\N$. It follows from \eqref{e.resol-1} that 
$\tilde{S}_n^{(j)}Q_j=Q_j\tilde{S}_n$, for $j,\ n\in\N$.
Accordingly, we can apply Lemma \ref{l.convergenza} (with $\tilde{S}_n$ in place of $S_n$ and $\tilde{S}_n^{(j)}$ in place of $S_n^{(j)}$) to conclude that $S$ is uniformly mean ergodic if and only if each $S_j$, $j\in\N$, is uniformly mean ergodic.

(iii) Apply the same argument as in part (ii) but now apply Lemma  \ref{l.convergenza} with $\tau_s$ in place of $\tau_b$; see Remark \ref{r.convergenza}.
\end{proof}
\textbf{Acknowledgements.} The research of the first two authors was partially 
supported
by the projects MTM2010-15200 and GVA Prometeo II/2013/013 (Spain). 
The second
author gratefully acknowledges the support of the Alexander von Humboldt 
Foundation.

\bigskip
\bibliographystyle{plain}

\end{document}